\newcolumntype{L}[1]{>{\raggedright\let\newline\\\arraybackslash\hspace{0pt}}m{#1}}
\newcolumntype{C}[1]{>{\centering\let\newline\\\arraybackslash\hspace{0pt}}m{#1}}
\newcolumntype{R}[1]{>{\raggedleft\let\newline\\\arraybackslash\hspace{0pt}}m{#1}}
\newtheorem{thm}{Theorem}[section]
\newtheorem{lem}[thm]{Lemma}
\newtheorem{prop}[thm]{Proposition}
\newtheorem{rem}[thm]{Remark}
\newtheorem{fact}[thm]{Fact}
\theoremstyle{definition}
\newcommand{\N}{\mathbb{Z}_+}
\newcommand{\R}{\mathbb{R}}
\newcommand{\comp}{\textrm{comp}}
\newcommand{\sgn}{\textrm{sgn}}
\newcommand*{\defeq}{\mathrel{\rlap{%
                     \raisebox{0.3ex}{$\m@th\cdot$}}%
                     \raisebox{-0.3ex}{$\m@th\cdot$}}%
                     =}
\let\c@table\c@figure
\newcommand{\pic}[4]
{
	\begin{figure}[!htbp]
		\begin{center}
			\includegraphics[width=#2\textwidth]{#1}
			\begin{minipage}[c]{0.8\textwidth}
				\begin{center}
					\caption{#3}\label{#4}
				\end{center}
			\end{minipage}
		\end{center}
	\end{figure}
}
\numberwithin{equation}{section}
\title[Euler scheme for approximation of solution of nonlinear ODEs]{Euler scheme for approximation of solution of nonlinear ODEs under inexact information}
\author[Natalia Czy\.z{}ewska]{Natalia Czy\.z{}ewska}
\author[Pawe\l \ M. Morkisz]{Pawe\l \ M. Morkisz}
\author[Pawe\l \ Przyby\l owicz]{Pawe\l \ Przyby\l owicz}
\address[Czy\.z{}ewska]{AGH University, Faculty of Applied Mathematics, al. Mickiewicza 30, 30-059 Krak\'ow, Poland}
\email{nczyzew@agh.edu.pl, corresponding author}
\address[Morkisz]{AGH University, Faculty of Applied Mathematics, al. Mickiewicza 30, 30-059 Krak\'ow, Poland}
\email{morkiszp@agh.edu.pl}
\address[Przyby\l owicz]{AGH University, Faculty of Applied Mathematics, al. Mickiewicza 30, 30-059 Krak\'ow, Poland}
\email{pprzybyl@agh.edu.pl}
\begin{document}
\begin{abstract}
We investigate error of the Euler scheme in the case when the right-hand side function of the underlying ODE satisfies nonstandard assumptions such as local one-sided Lipschitz condition and local H\"older continuity. Moreover, we assume two cases in regards to information availability: exact and noisy with respect to the right-hand side function. Optimality analysis of the Euler scheme is also provided. Finally, we present the results of some numerical experiments.
\newline\newline
Mathematics Subject Classification: 65L05, 65L70
\end{abstract}
\keywords{noisy information, ODEs, nonstandard assumptions, local conditions, one-sided Lipschitz condition, H\"older continuity, Euler scheme}
\maketitle	

\tableofcontents

%==============================================================================
\section{Introduction}

In this paper we investigate error of the Euler scheme for the following initial-value problems
\begin{eqnarray}
\label{ode_gen}
	\left\{ \begin{array}{ll}
	z'(t)=g(t,z(t)), \quad &t\in [a, b],\\
	z(a)=\xi,
	\end{array} \right.
\end{eqnarray}
where $-\infty < a < b < + \infty$, $\xi\in\R^d$ and a right-hand side function $g: [a, b]\times\R^d\to\R^d$, $d\in\N$. Comparing to the classical assumptions such as global Lipschitz condition we assume that the right-hand side function $g$ is continuous and of at most linear growth, but it satisfies the one-sided Lipschitz condition together with the H\"older condition only locally. 

The numerical approximation of solutions of ordinary differential equations (ODEs) has drawn the attention of numerous scientists \cite{butcher, kincaid, driver, griff, hawan, jackiewicz}. Furthermore, solving ODE problems with noisy information was studied in \cite{TBPP2022, TBMGPMPP2021, Kac2, KaPl90, KaPr16, MoPl16, MoPl20, NOV, Pla96, Pla14}. Also, approximation of the solutions of partial differential equations or stochastic differential equations with both exact and inexact information was a subject of extensive study, \cite{ames, larss, mazum, samar, Wer96, Wer97} and \cite{kloeden, milstein, KaMoPr19, PMPP17, PMPP19} respectively.
Nevertheless, when it comes to the nonstandard assumptions about the right-hand side function of the initial-value problem,
only the existence and uniqueness of the solutions were examined, see \cite{gorn_ing}. There is a gap in the literature about numerical methods for such cases under nonstandard assumptions.

If we take a closer look at the computation on a digital computer, we quickly conclude that the computer uses only a finite set of numbers because of the finite memory. So all computer calculations are burdened with errors, e.g., from rounding.
Also, the novel CPUs and GPUs have an option to speed up the computations when there is lower precision of the processed numbers. From the mathematical point of view, lowering the precision can be interpreted as adding noisy information with the control for the relative error of the noise. 
Hence, all the applied numerical methods should have a strict error analysis with noisy information setting.

In this paper, we deal with nonstandard assumptions about the right-hand side function $g$, and we allow both exact and inexact information about the initial value $\xi$ and values of the right-hand side function $g$. We changed a bit of assumptions from the auxiliary ordinary differential equation in \cite{CZPMPP2022} where we assume locally
H\"older continuity and global one-sided Lipschitz conditions. Now we locally consider
H\"older continuity and one-sided Lipschitz condition only in a certain neighborhood.
This weakening of the assumptions was caused by the necessity that came from a phase change model of metallic materials \cite{CZIEEE2022, CZPMPP2022}.

According to our best knowledge, there is a lack of literature on numerical methods of initial-value problems under nonstandard assumptions with noisy information. We focus on the most basic explicit Euler scheme. We believe that this paper might be helpful for further research on implicit numerical methods, higher-order methods, or even numerical methods for delay differential equations with noisy information under nonstandard assumptions on the right-hand side function.

To do so, we start with an error analysis for the classical explicit Euler scheme under nonstandard assumptions for the right-hand side function $g$ mentioned before, i.e., local H\"older continuity and one-sided Lipschitz condition (Theorem~\ref{est_euler_exact}). Next, we introduce a corruption for the right-hand side function and then we perform an error analysis for the Euler scheme (Theorem~\ref{Euler_error_inexact}). In both cases, we show the dependence of the error on H\"older exponents of the right-hand side function. Furthermore, we establish lower bounds for all algorithms based on exact and inexact information in the certain class of right-hand side functions (Proposition~\ref{prop_Euler_est_ex} and Proposition~\ref{prop_Euler_est_inex} respectively). Consequently, we can provide a result concerning the optimality of the Euler scheme in a particular case (Theorem~\ref{optimal_beta1}). The final contribution is performing numerical experiments to confirm our theoretical results.

The paper is organized as follows. The problem posing with the nonstandard assumptions is given in Section~\ref{sec_prelim}. Section~\ref{sec_euler_exact} on exact information includes a definition of the Euler scheme and a result on the upper bound of the error of this scheme under mentioned nonstandard assumptions. The detailed meaning of the noise, which corrupts values of right-hand side function as well as initial value, is explained in Section~\ref{sec_euler_noisy}, along with the definition of the Euler scheme in case of inexact information and the upper bound of the Euler scheme's error with available only noisy standard information about the right-hand side function. Finally, Section~\ref{sec_optimal} contains the definition of the setting in the spirit of information-based complexity (see \cite{TWW88}), the lower bounds of the Euler scheme, and the optimality of this scheme as the main result of this paper. Section~\ref{sec_num_results} is dedicated to numerical experiments performed using our implementation of the Euler scheme. Last but not least, we put all the auxiliary results (including, a proof of the existence and uniqueness of the solutions of \eqref{ode_gen}) in the Appendix.

%%%%%%%%%%%%%%%%%%%%%%%%%%%%
\section{Preliminaries}\label{sec_prelim}
For $x,y\in\mathbb{R}^d$ we take $\displaystyle{\langle x,y\rangle=\sum\limits_{k=1}^dx_ky_k}$ and $\|x\|=\langle x,x\rangle^{1/2}$. For $R\in [0,+\infty]$ and $x\in\mathbb{R}^d$, we denote by $B_x(R) = \{ y \in \R^d  \ | \ \|y-x\| \leq R \}$. Of course $B_x(+\infty)=\mathbb{R}^d$. Moreover, for $H\in\mathbb{R}$ we denote by $H^+=\max\{0,H\}$. Additionally, by $U \in \textrm{comp} (\R^d)$ we mean that $U \subset \R^d$ is a compact set.

We assume that the right-hand side function $g:[a,b]\times\mathbb{R}^d\to\mathbb{R}^d$ satisfies the following conditions:
 \begin{itemize}
 		\item [(G1)] $g\in C([a,b]\times\R^d;\R^d)$.
 		\item [(G2)] There exists $K\in (0,+\infty)$ such that for all $(t,y)\in [a,b]\times\R^d$
 		\begin{displaymath}
 		\|g(t,y)\|\leq K(1+\|y\|).
 		\end{displaymath}
 		\item [(G3)] For every $U \in \comp(\mathbb{R}^d)$ there exists $H_U \in \R$ such that for all $t\in [a,b]$, $y_1,y_2\in U$
 	\begin{displaymath}
 		\langle y_1-y_2, g(t,y_1)-g(t,y_2)\rangle \leq H_U \| y_1 - y_2 \|^2.
 	\end{displaymath}
 		\item [(G4)] There exist $\alpha, \beta \in (0,1]$ such that for every $U \in \comp(\mathbb{R}^d)$ there exists $L_U \in (0, \infty)$ such that for all $t_1, t_2 \in [a,b]$, $y_1,y_2\in U$
 	\begin{displaymath}
 		\|g(t_1,y_1)-g(t_2,y_2)\|\leq L_U\Bigl( |t_1-t_2|^\alpha + \|y_1-y_2\|^\beta\Bigr).
 	\end{displaymath}
 	\end{itemize}
The assumption (G2) is the classical global linear growth condition. However, instead of a global Lipschitz condition, we assume that the right-hand side function $g$ locally satisfies one-sided Lipschiz assumption (G3). 
It follows from Lemma \ref{odes_sol_ex1} that under the assumptions (G1)-(G3) the equation \eqref{ode_gen} has a unique solution $z\in C^1([a,b];\mathbb{R}^d)$. The additional assumption (G4), called the $(\alpha,\beta)$-local H\"older condition, is needed to obtain an upper error bound for the Euler scheme that depends on $(\alpha,\beta)$. We stress that under the assumptions (G1)-(G4) the right-hand side function might be highly nonlinear, see, for example, Section~ \ref{sec_num_results}.

In the next section, we investigate the error of the classical Euler scheme under such nonstandard assumptions. Furthermore, we consider the cases where exact and inexact information about the right-hand side function $g$ is available.
%%%%%%%%%%%%%%%%%%%%%
\section{Euler scheme under exact information}\label{sec_euler_exact}

Let $n\in \N$, $t_k = a + kh$ for $k=0,1,\dots,n$ where $h=\frac{b-a}{n}$. The Euler scheme is defined as follows
\begin{eqnarray}
\label{ode_euler}
	\left\{ \begin{array}{ll}
	y_0=\xi,\\
	y_{k+1} = y_k + h \cdot g(t_k, y_k) \quad \hbox{for} \ &k = 0,1,\dots, n-1,
	\end{array} \right.
\end{eqnarray}
The  approximation of $z=z(t)$ in  $[a,b]$ is given by a piecewise linear function $l_n:[a,b]\to\mathbb{R}^d$, defined in the following way
\begin{equation}
    l_n(t)=l_{k,n}(t) \ \hbox{for} \ t\in [t_k,t_{k+1}], \ k=0,1,\ldots,n-1,
\end{equation}
where
\begin{equation}
    l_{k,n}(t)=y_k+(t-t_k)\cdot g(t_k,y_k).
\end{equation}
The aim of this section is to prove the following result.
\begin{thm}
\label{est_euler_exact}
Let $\xi \in \R^d$ and let us assume that $g$ satisfies (G1)-(G4). Then there exists $\bar C_1 \in [0,\infty)$ such that for all $n\in\mathbb{N}$
\begin{equation}
    %\max_{k=0,1,\dots, n} \|z(t_k)-y_k\| 
    \sup\limits_{a\leq t \leq b}\|z(t)-l_n(t)\|\leq \bar C_1 (1+\|\xi\|)(h^\alpha + h^\beta).
\end{equation}
\end{thm}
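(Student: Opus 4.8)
The plan is to run the classical a~priori argument for one-step methods, but carefully localize everything so that the one-sided Lipschitz constant $H_U$ and the Hölder constant $L_U$ from (G3)--(G4) are applied on a fixed compact set that contains both the exact trajectory and the Euler polygon. The first step is therefore to establish an a~priori bound: using (G2), a discrete Grönwall argument shows that the Euler iterates satisfy $\|y_k\|\le C(1+\|\xi\|)$ with $C$ depending only on $K$ and $b-a$, and hence $\|l_n(t)\|\le C'(1+\|\xi\|)$ for all $t\in[a,b]$ and all $n$; similarly, (G1)--(G3) together with Lemma~\ref{odes_sol_ex1} give $\|z(t)\|\le C''(1+\|\xi\|)$. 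Fix $R\defeq (C'+C'')(1+\|\xi\|)$ and let $U=B_0(R)$, so that $z(t),l_n(t)\in U$ for all $t$ and $n$; from now on all constants $H_U$, $L_U$ are frozen. Note $\|z'(t)\|=\|g(t,z(t))\|\le K(1+\|z(t)\|)$ is bounded, so $z$ is Lipschitz on $[a,b]$ with a constant $M=M(R)$.

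Next I would set up the error function. On each subinterval $[t_k,t_{k+1}]$ write $e(t)=z(t)-l_n(t)$ and differentiate: $e'(t)=g(t,z(t))-g(t_k,y_k)$. The standard trick is to estimate $\frac{d}{dt}\|e(t)\|^2=2\langle e(t),g(t,z(t))-g(t_k,y_k)\rangle$ by splitting the right-hand side as
\begin{equation}
g(t,z(t))-g(t_k,y_k)=\bigl(g(t,z(t))-g(t,l_n(t))\bigr)+\bigl(g(t,l_n(t))-g(t_k,y_k)\bigr).
\end{equation}
For the first bracket, apply the one-sided Lipschitz condition (G3) on $U$ to get $\langle e(t),g(t,z(t))-g(t,l_n(t))\rangle\le H_U\|e(t)\|^2$. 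For the second bracket, apply the $(\alpha,\beta)$-Hölder condition (G4) on $U$: since $|t-t_k|\le h$ and $\|l_n(t)-y_k\|=|t-t_k|\,\|g(t_k,y_k)\|\le h\,K(1+\|y_k\|)\le C''' h$, this bracket is bounded in norm by $L_U(h^\alpha+(C''' h)^\beta)\le \tilde L(h^\alpha+h^\beta)$. Combining and using Cauchy--Schwarz on the cross term gives a differential inequality of the form $\frac{d}{dt}\|e(t)\|^2\le 2H_U^+\|e(t)\|^2+2\tilde L(h^\alpha+h^\beta)\|e(t)\|$, hence $\frac{d}{dt}\|e(t)\|\le H_U^+\|e(t)\|+\tilde L(h^\alpha+h^\beta)$ wherever $\|e(t)\|>0$ (the standard one-sided-difference-quotient argument handles the non-smooth points).

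Finally I would integrate this inequality across the grid. On $[t_k,t_{k+1}]$ Grönwall gives $\|e(t_{k+1})\|\le e^{H_U^+ h}\|e(t_k)\|+\frac{\tilde L}{H_U^+}(e^{H_U^+h}-1)(h^\alpha+h^\beta)$ (with the obvious interpretation when $H_U^+=0$), and since $e(t_0)=z(a)-\xi=0$, iterating over $k=0,\dots,n-1$ and summing the geometric series yields $\max_k\|e(t_k)\|\le \frac{\tilde L}{H_U^+}(e^{H_U^+(b-a)}-1)(h^\alpha+h^\beta)$. A final short step extends the bound from the grid points to all of $[a,b]$: for $t\in[t_k,t_{k+1}]$, $\|e(t)\|\le\|e(t_k)\|+\int_{t_k}^t\|e'(s)\|\,ds$ and $\|e'(s)\|\le\|g(s,z(s))-g(t_k,y_k)\|$ is bounded using (G4) on $U$ again by a constant times $(h^\alpha+h^\beta)$ plus $H_U^+\|e(s)\|$. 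Tracking the dependence on $\|\xi\|$ through $R$, $\tilde L$, $C'''$ shows every constant is $\le(\text{const})(1+\|\xi\|)$, which after collecting terms gives the claimed bound with a suitable $\bar C_1$. The main obstacle is the localization bookkeeping: one must verify that the a~priori bounds on $y_k$, $l_n$ and $z$ are genuinely uniform in $n$ so that a single compact $U$ (and hence single pair $H_U,L_U$) works for all $n$, and that the $h^\beta$ term arising from $\|l_n(t)-y_k\|^\beta$ does not secretly carry an $n$-dependent constant — everything else is the textbook Grönwall computation.
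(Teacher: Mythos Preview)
Your argument is correct and reaches the same conclusion, but it is organized differently from the paper's proof. The paper introduces, on each subinterval $[t_k,t_{k+1}]$, an auxiliary local solution $z_k$ of the ODE with initial value $z_k(t_k)=y_k$, and decomposes $\|z(t_{k+1})-y_{k+1}\|\le\|z(t_{k+1})-z_k(t_{k+1})\|+\|z_k(t_{k+1})-y_{k+1}\|$. The first piece (propagated error) is controlled by applying (G3) to the difference of two genuine solutions and Gr\"onwall, yielding the factor $e^{H_1^+h}$; the second piece (local truncation error) is bounded via (G4). You instead work directly with $e(t)=z(t)-l_n(t)$ and split $g(t,z(t))-g(t_k,y_k)$ at the current point $l_n(t)$, applying (G3) to $\langle e,\,g(t,z)-g(t,l_n)\rangle$ and (G4) to $g(t,l_n)-g(t_k,y_k)$. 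Your route is more elementary in that it avoids invoking existence and uniqueness for the $n$ local problems $z_k$; the paper's route is the classical ``consistency plus stability'' template, which separates the two mechanisms cleanly and generalizes more transparently to other one-step schemes.

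Two small remarks. First, your ``final short step'' extending the bound from grid points to all of $[a,b]$ is unnecessary: the Gr\"onwall inequality you derived on $[t_k,t_{k+1}]$ already controls $\|e(t)\|$ for every $t$ in that interval, not just at $t_{k+1}$. (The justification you sketched there---getting an $H_U^+\|e(s)\|$ term out of (G4)---is also not quite right, since (G4) yields $L_U\|\cdot\|^\beta$ rather than a linear term; but as the step is redundant this is moot.) Second, the claim that ``every constant is $\le(\text{const})(1+\|\xi\|)$'' is a slight overstatement: $H_U$ and $L_U$ depend on $U=B_0(R)$ and hence on $\|\xi\|$ in an uncontrolled way. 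This is harmless, however, because the theorem is stated for fixed $(\xi,g)$ and $\bar C_1$ is allowed to depend on them---the paper's constant has the same feature.
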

\begin{proof}
By the assumption (G2) we have that for all $k=0,1,\ldots,n-1$
\begin{displaymath}
	\|y_{k+1}\|\leq \|y_k\|+h \|g(t_k,y_k)\| \leq (1+hK)\|y_k\|+hK.
\end{displaymath}
Hence, by the discrete version of Gronwall's lemma we get that for all $k=0,1,\ldots,n$ that
\begin{equation}
    \|y_k\|\leq (1+Kh)^k \|\xi\| + (1+Kh)^k -1\leq
    e^{K(b-a)}(1+\|\xi\|),
\end{equation}
and  therefore
\begin{equation}
\label{euler_sol_est_1}
    \max_{n \in \N} \max_{0 \leq k \leq n} \|y_k\| \leq C_1(1+\|\xi\|),
\end{equation}
where $C_1 = C_1(a,b,K) = e^{K(b-a)}$. Note that $C_1\leq C_0$, where $C_0$ is defined in Lemma \ref{odes_sol_ex1}. Now for $k=0,1,\ldots,n-1$ we consider the following local ODE
	\begin{equation}
		\label{def_local_zk}
		z_k'(t)=g(t,z_k(t)), \quad t\in [t_k,t_{k+1}], \quad z_k(t_k)=y_k.
	\end{equation}
	By Lemma~\ref{odes_sol_ex1} there exists a unique solution $z_k:[t_k,t_{k+1}]\to\mathbb{R}^d$ of \eqref{def_local_zk}. From the assumption (G2) and by \eqref{euler_sol_est_1} we get for all $t\in [t_k,t_{k+1}]$, $k=0,1,\ldots,n-1$ that
\begin{equation}
    \|z_k(t)\| 
    %&=& \|y_k + \int_{t_k}^t g(s, z_k(s))ds\| \notag \\
    %&\leq& \|y_k\| + K \int_{t_k}^t (1+\|z_k(s)\|)ds \notag \\
    %&\leq& e^{K(b-a)}(1+\|\xi\|) + Kh + K \int_{t_k}^t \|z_k(s)\|ds \notag \\
    \leq e^{K(b-a)}(1+\|\xi\|) + K(b-a) + K \int\limits_{t_k}^t \|z_k(s)\|ds,
    %\notag \\
\end{equation}
and, by the Gronwall's lemma, 
\begin{equation}
\label{est_zk_local_1}
    \max_{n \in \N} \max_{k=0,1,\dots, n-1} \sup_{t_k \leq t \leq t_{k+1}} \|z_k(t)\| \leq C_2(\|\xi\|+1),
\end{equation}
where $C_2=C_2(a,b,K)=e^{K(b-a)}[e^{K(b-a)} + K(b-a)]$. We have $C_0\leq C_2$. Moreover, by (G2) and \eqref{est_zk_local_1} we have for $t\in [t_k,t_{k+1}]$, $k=0,1,\ldots,n-1$
\begin{equation}
    \|z_k(t) - y_k \|\leq hK\Bigl(1+\sup\limits_{t_k\leq t\leq t_{k+1}}\|z_k(t)\|\Bigr) \leq h C_3(1+\|\xi\|),
\end{equation}
where $C_3 = C_3(a, b, K) = K (1+ C_2) = K (1+ e^{K(b-a)}[e^{K(b-a)} + K(b-a)])$. Therefore, by \eqref{euler_sol_est_1}, \eqref{est_zk_local_1}, \eqref{sol_bound} we have
\begin{equation}
    y_k,z_k(t),z(t)\in B_{0}(C_2(1+\|\xi\|))
\end{equation}
for all $t\in [t_k,t_{k+1}]$, $k=0,1,\ldots,n-1$, $n\in\mathbb{N}$. This and (G4) imply that for \linebreak $U=B_{0}(C_2(1+\|\xi\|))$ there exists $L_1\in (0,+\infty)$ such that for all  $k=0,1,\ldots,n-1$, $n\in\mathbb{N}$
\begin{eqnarray}
\label{error_decomp_0}
    &&\| z_k(t_{k+1}) - y_{k+1} \| \leq
    \int_{t_k}^{t_{k+1}} \| g(s, z_k(s)) - g(t_k, y_k) \| ds \notag \\
    &&
    \leq L_1 \int_{t_k}^{t_{k+1}} ( |s-t_k|^{\alpha} + \|z_k(s)-y_k\|^{\beta}) ds %\notag \\
    %&&\leq L_1 h \left( h^{\alpha} + h^{\beta}C_3^{\beta}(1+\|\xi\|)^{\beta}\right) 
    \leq L_1 C_4 (1+\|\xi\|) h (h^{\alpha} + h^{\beta}),
\end{eqnarray}
where $C_4=C_4(a, b, K, \beta) = \max \{1, 2C_3^{\beta}\}$. (We have also used the fact that $(1+x)^\beta\leq 2+x\leq 2(1+x)$ for all $x\geq 0$, $\beta\in (0,1]$.) Furthermore, for $k=0,1,\dots, n-1$ we have
\begin{equation}
\label{error_decomp_1}
    \|z(t_{k+1}) - y_{k+1} \| \leq \|z(t_{k+1}) - z_{k}(t_{k+1})\| + \|z_{k}(t_{k+1}) - y_{k+1} \|,
\end{equation}
and for $t\in [t_k,t_{k+1}]$, $k=0,1,\ldots,n-1$ it  holds that
%\begin{equation}
%    z(t_{k+1}) = z(t_k) + \int_{t_k}^{t_{k+1}} g(s, z(s)) ds,
%\end{equation}
%\begin{equation}
%    z_k(t_{k+1}) = y_k + \int_{t_k}^{t_{k+1}} g(s, z_k(s)) ds.
%\end{equation}
%Note
\begin{equation}
    \frac{d}{dt} \|z(t) - z_k(t)\|^2 = 2 \langle z(t) - z_k(t), g(t, z(t)) - g(t, z_k(t)) \rangle.
\end{equation} 

Therefore, by (G3) applied to $U=B_{0}(C_2(1+\|\xi\|))$ we receive that there exists $H_1 \in \R$ such that
\begin{equation}
    \frac{d}{dt} \|z(t) - z_k(t) \|^2 \leq  2H_1 \|z(t) - z_k(t)\|^2 \notag \\
    \leq 2H_1^+ \|z(t) - z_k(t)\|^2
\end{equation}
for all $t\in [t_k, t_{k+1}]$, $k=0,1,\ldots,n-1$, $n\in\mathbb{N}$. Since $z_k(t_k)=y_k$, we get
\begin{equation}
    \|z(t) - z_k(t) \|^2\leq \|z(t_k) - y_k \|^2+2H^+_1\int\limits_{t_k}^t \|z(s) - z_k(s) \|^2ds,
\end{equation}
for $t\in [t_k,t_{k+1}]$ and,  by the  Gronwall's lemma,
\begin{equation}
\label{error_decomp_2}
    \| z(t) - z_k(t)\| \leq e^{H_1^+ (t-t_k)} \|z(t_k) - y_k\|. 
\end{equation}
Combining \eqref{error_decomp_0}, \eqref{error_decomp_1}, \eqref{error_decomp_2}  we receive for $k=0,1,\ldots,n-1$
\begin{equation}
    \| z(t_{k+1}) - y_{k+1}\| \leq e^{H_1^+ h} \|z(t_k) - y_k \| + L_1 C_4 (1+\|\xi\|) h (h^{\alpha} + h^{\beta}).
\end{equation}
Denoting by $e_k = z(t_k) - y_k$ we arrive at
\begin{displaymath}
    \|e_{k+1}\| \leq e^{H_1^+ h}\|e_k\| + L_1 C_4 (1+\|\xi\|) h (h^{\alpha} + h^{\beta}),
\end{displaymath}
with $e_0 = z(a) - y_0 = 0$. 
If $H_1^+>0$ then by Gronwall's lemma we obtain
%\begin{displaymath}
%    \|e_k\| \leq e^{H_1^+ h}\|e_0\| + \frac{e^{H_1^+ h K}-1}{e^{H_1^+ h}-1} L_1 C_4 (1+\|\xi\|) h (h^\alpha + h^\beta)
%\end{displaymath}
%for all $k=0,1,\dots, n$. Note
%\begin{eqnarray}
%    e^{H_1^+ h} &\geq& 1 + H_1^+ h \notag \\
%    e^{H_1^+ h} - 1 &\geq& H_1^+ h \notag \\
%    0 < \frac{1}{e^{H_1^+ h} - 1} &\leq& \frac{1}{H_1^+ h} \notag
%\end{eqnarray}
%and when $H_1^+=0$ then we can get $\lim_{H_1^+ \rightarrow 0^+}$. After that
\begin{equation}
\label{est_ek_Euler_1}
    \|e_k\| \leq
     \frac{e^{H_1^+ (b-a)}-1}{H_1^+} L_1 C_4 (1+\|\xi\|)(h^\alpha + h^\beta),
\end{equation}
for $k=0,1,\ldots,n$, where $C_4 = C_4(a,b,K,\beta)$. In the particular case, when $H_1^+=0$ we simply have that for $k=0,1,\ldots,n$
\begin{equation}
\label{est_ek_Euler_2}
    \|e_k\|\leq k L_1 C_4 (1+\|\xi\|)(h^\alpha + h^\beta)\leq (b-a)L_1 C_4 (1+\|\xi\|)(h^\alpha + h^\beta).
\end{equation}
For $t\in [t_k,t_{k+1}]$, $k=0,1,\ldots,n$ we have by \eqref{error_decomp_2} that
\begin{eqnarray}
\label{est_linear_Euler_1}
    &&\|z(t)-l_n(t)\|=\|z(t)-l_{k,n}(t)\|\leq \|z(t)-z_k(t)\|+\|z_k(t)-l_{k,n}(t)\|\notag\\
    && \leq e^{H_1^+h}\|e_k\|+\|z_k(t)-l_{k,n}(t)\|,
\end{eqnarray}
where by \eqref{error_decomp_0}
\begin{equation}
\label{est_linear_Euler_2}
    \|z_k(t)-l_{k,n}(t)\|\leq\int\limits_{t_k}^{t_{k+1}} \|g(s,z_k(s))-g(t_k,y_k)\|ds\leq  L_1 C_4 (1+\|\xi\|) h (h^{\alpha} + h^{\beta}).
\end{equation}
Combining \eqref{est_linear_Euler_1}, \eqref{est_linear_Euler_2}, \eqref{est_ek_Euler_1}, \eqref{est_ek_Euler_2} we get the thesis.
\end{proof}
%%%%%%%%%%%

%%%%%%%%%%%%%%%%%%%%
\section{Euler scheme under noisy information}\label{sec_euler_noisy}
In this section we show the upper bound for the error of the Euler scheme in the case of inexact information. Namely, we consider  the corrupted right-hand side functions of the following form
\begin{equation}
    \tilde g(t, y) = g(t,y) + \delta_g(t,y), \quad (t, y) \in [a,b]\times \R^d,
\end{equation}
where $\delta_g: [a,b]\times \R^d \rightarrow \R^d$ is a  {\it corrupting function} that is Borel measurable and  
\begin{equation}
\label{corr_f_assumpt}
    \| \delta_g(t,y) \| \leq \delta (1+ \|y\|)
\end{equation}
for all $(t, y) \in [a,b]\times \R^d$ and $\delta \in [0,1]$. We refer to $\delta$ as to {\it precision parameter}.

In the case of inexact evaluations of $g$, the Euler scheme has the following form
\begin{eqnarray}
\label{ode_euler_noisy}
	\left\{ \begin{array}{ll}
	\tilde y_0=\tilde \xi, \\
	\tilde y_{k+1} = \tilde y_k + h\cdot \tilde g(t_k, \tilde y_k), \quad &k = 0,1,\dots, n-1,
	\end{array} \right.
\end{eqnarray}
where $\tilde \xi\in B_{\xi}(\delta)$. The  approximation of $z=z(t)$ in  $[a,b]$ is given by a piecewise linear function $\tilde l_n:[a,b]\to\mathbb{R}^d$, defined as
\begin{equation}
    \tilde l_n(t)=\tilde l_{k,n}(t) \ \hbox{for} \ t\in [t_k,t_{k+1}], \ k=0,1,\ldots,n-1,
\end{equation}
where
\begin{equation}
    \tilde l_{k,n}(t)=\tilde y_k+(t-t_k)\cdot \tilde g(t_k,\tilde y_k).
\end{equation}
Of course when the information is exact (i.e., $\delta=0$) then $\tilde y_k=y_k$ for $k=0,1,\ldots,n$, and $l_n\equiv\tilde l_n$. Moreover, the Euler algorithm $\tilde l_n$ uses $n$ (noisy) evaluations of $\tilde g$.
%%%%%%%%%

In the theorem below we show the upper error bound for the Euler scheme in the case of inexact information about the function $g$. We stress that we have  different bounds in the local Lipschitz case ($\beta=1$) and in the local H\"older case ($\beta\in (0,1)$). This is due to the fact that locally Lipschitz continuous function (i.e. $\beta=1$ in (G4)) satisfies also locally one-sided Lipschitz condition (G3). This is not the case when $g$ is only locally H\"older continuous. We stress that the condition (G3) is crucial for obtaining the upper error bound in the case when $\beta\in (0,1)$, see also Remark \ref{upper_b_euler_inexact}.
%%%%%%%%%
\begin{thm}
\label{Euler_error_inexact}
Let $\xi \in \R^d$, let $g$ satisfy (G1)-(G4), and let $\delta_g$ be a Borel measurable corrupting function satisfying \eqref{corr_f_assumpt}. Then there exists $\bar C_2 \in [0,\infty)$ such that for all $n\in\mathbb{N}$, \linebreak $n\geq \lceil 2(b-a)\rceil+1$, $\delta\in [0,1]$, $\tilde \xi\in B_{\xi}(\delta)$  the following holds:
\begin{itemize}
    \item if $\beta=1$ then
    \begin{equation}
    \label{est_ztl_b1}
        %\max_{k=0,1,\dots, n} \|z(t_k)-\tilde y_k\|
        \sup\limits_{a\leq t\leq b}\|z(t)-\tilde l_n(t)\|\leq \bar C_2 (1+\|\xi\|)(h^{\alpha}+\delta),
    \end{equation}
\item if $\beta\in (0,1)$ then    
\begin{equation}
\label{est_ztl_b01}
    %\max_{k=0,1,\dots, n} \|z(t_k)-\tilde y_k\| 
    \sup\limits_{a\leq t\leq b}\|z(t)-\tilde l_n(t)\|\leq \bar C_2 (1+\|\xi\|)\Bigl(h^\alpha + h^\beta+\mathbf{1}_{(0,1]}(\delta)(h^{1/2}+\delta)\Bigr).
\end{equation}
\end{itemize}
\end{thm}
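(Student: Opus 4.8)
\emph{Proof plan.} The plan is to rerun the three–step scheme of the proof of Theorem~\ref{est_euler_exact} with the perturbed iterates $\tilde y_k$ in place of $y_k$, keeping careful track of the two new error sources: the corruption $\delta_g$ of the right–hand side and the perturbation $\tilde\xi-\xi$ of the initial value. First, since $\|\tilde g(t,y)\|\le\|g(t,y)\|+\|\delta_g(t,y)\|\le(K+\delta)(1+\|y\|)\le(K+1)(1+\|y\|)$ (because $\delta\le1$) and $\|\tilde\xi\|\le\|\xi\|+\delta\le1+\|\xi\|$, an analogue of the discrete Gronwall estimate \eqref{euler_sol_est_1} gives $\max_{0\le k\le n}\|\tilde y_k\|\le\tilde C_1(1+\|\xi\|)$ with $\tilde C_1=\tilde C_1(a,b,K)$ \emph{independent of $n$, of $\delta$ and of the choice of the (merely Borel) corruption $\delta_g$ and of $\tilde\xi\in B_\xi(\delta)$}. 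Consequently the auxiliary problems $z_k'=g(t,z_k)$ on $[t_k,t_{k+1}]$, $z_k(t_k)=\tilde y_k$, are uniquely solvable by Lemma~\ref{odes_sol_ex1}, satisfy $\|z_k(t)\|\le\tilde C_2(1+\|\xi\|)$ and $\|z_k(t)-\tilde y_k\|\le h\tilde C_3(1+\|\xi\|)$, and $\tilde y_k,z_k(t),z(t)$ all lie in the single compact set $U=B_0(\tilde C_2(1+\|\xi\|))$ for every $t\in[t_k,t_{k+1}]$, $k$, $n$; in particular the constants $H_U$ from (G3) and $L_U$ from (G4) are at our disposal and do not depend on $n$ or $\delta$.

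Next, the one–step analysis \eqref{error_decomp_0} is changed only by the extra term $-h\,\delta_g(t_k,\tilde y_k)$:
\[
z_k(t_{k+1})-\tilde y_{k+1}=\int_{t_k}^{t_{k+1}}\!\bigl(g(s,z_k(s))-g(t_k,\tilde y_k)\bigr)\,ds-h\,\delta_g(t_k,\tilde y_k),
\]
so by (G4), the bound $\|z_k(s)-\tilde y_k\|=O\!\bigl(h(1+\|\xi\|)\bigr)$ and \eqref{corr_f_assumpt} one gets $\|z_k(t_{k+1})-\tilde y_{k+1}\|\le\tilde C_4(1+\|\xi\|)\,h\,(h^\alpha+h^\beta+\delta)$, with the same bound for $\sup_{[t_k,t_{k+1}]}\|z_k(t)-\tilde l_{k,n}(t)\|$. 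The stability step is \emph{unchanged}: the computation leading to \eqref{error_decomp_2} uses only (G3) on $U$ together with $z_k(t_k)=\tilde y_k$, hence $\|z(t)-z_k(t)\|\le e^{H_U^+(t-t_k)}\|z(t_k)-\tilde y_k\|$ on $[t_k,t_{k+1}]$. This is exactly where (G3) is indispensable when $\beta\in(0,1)$: without it one would need a local Lipschitz bound on $g$ in the spatial variable, which is not available.

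With $\tilde e_k=z(t_k)-\tilde y_k$, combining the two estimates yields the recursion $\|\tilde e_{k+1}\|\le e^{H_U^+h}\|\tilde e_k\|+\tilde C_4(1+\|\xi\|)h(h^\alpha+h^\beta+\delta)$ started from $\|\tilde e_0\|=\|\xi-\tilde\xi\|\le\delta$. The discrete Gronwall lemma (as in the proof of Theorem~\ref{est_euler_exact}) and the passage \eqref{est_linear_Euler_1} to the piecewise linear interpolant then give $\sup_{a\le t\le b}\|z(t)-\tilde l_n(t)\|\le\bar C_2(1+\|\xi\|)(h^\alpha+h^\beta+\delta)$ with $\bar C_2$ independent of $n$, $\delta$, $\tilde\xi$. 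If $\beta=1$, the hypothesis $n\ge\lceil2(b-a)\rceil+1$ forces $h<\tfrac12<1$, whence $h^\beta=h\le h^\alpha$ and the bound collapses to \eqref{est_ztl_b1}; if $\beta\in(0,1)$, then since $\delta\le\mathbf 1_{(0,1]}(\delta)(h^{1/2}+\delta)$ for every $\delta\in[0,1]$, the same bound already implies \eqref{est_ztl_b01}. (One reason the two regimes are stated separately is that if one dispenses with the auxiliary $z_k$ and compares $z$ with $\tilde y_k$ directly through the quadratic form in (G3), a term $\|\tilde e_k\|^{2\beta}$ surfaces in the recursion for $\|\tilde e_k\|^2$; controlled by Young's inequality $x^{2\beta}\le\varepsilon x^2+C_\varepsilon$ and summed over the $\Theta(1/h)$ steps, it contributes, after a square root, precisely the $h^{1/2}$ in \eqref{est_ztl_b01}.)

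The only genuinely new obstacle relative to Theorem~\ref{est_euler_exact} is therefore the uniformity in the first step: one must produce a single compact $U$ — hence single constants $H_U$, $L_U$ — valid simultaneously for all step counts $n$, all precisions $\delta\in[0,1]$ and all admissible corruptions $\delta_g$. Once that is secured, the perturbations enter only additively, through the $O\!\bigl(h\delta(1+\|\xi\|)\bigr)$ term in the one–step error and the $O(\delta)$ initial error, and the remainder is a line–by–line repetition of the exact–information argument; the split $\beta=1$ versus $\beta\in(0,1)$ is a matter of how cleanly the Hölder term is absorbed, not of a change of method.
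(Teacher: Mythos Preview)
Your proof is correct, and it takes a genuinely different route from the paper's.

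The paper decomposes the error as $(z-l_n)+(l_n-\tilde l_n)$: it first bounds $\max_k\|y_k-\tilde y_k\|$, where $y_k$ are the exact-information Euler iterates, and then invokes Theorem~\ref{est_euler_exact} for $z-l_n$. For $\beta=1$ the bound on $\|y_k-\tilde y_k\|$ comes straight from (G4) and is $O(\delta)$. For $\beta\in(0,1)$ the paper instead squares the recursion and uses (G3) on the cross term $\langle e_k,g(t_k,\tilde y_k)-g(t_k,y_k)\rangle$; the term $h^2\|g(t_k,\tilde y_k)-g(t_k,y_k)\|^2\le h^2L_2^2\|e_k\|^{2\beta}\le h^2L_2^2(1+\|e_k\|^2)$, summed over $n=\Theta(1/h)$ steps, is precisely what produces the extra $h^{1/2}$ after the square root. (Your parenthetical remark at the end is essentially a description of the paper's actual argument.)

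You instead compare $z$ to $\tilde y_k$ \emph{directly}, running the local-ODE machinery of Theorem~\ref{est_euler_exact} with $z_k(t_k)=\tilde y_k$. The stability step $\|z-z_k\|\le e^{H_U^+(t-t_k)}\|\tilde e_k\|$ uses only (G3) and is insensitive to $\beta$; the corruption enters only additively in the one-step error as $h\delta(1+\|\tilde y_k\|)$. This yields the uniform bound $\bar C_2(1+\|\xi\|)(h^\alpha+h^\beta+\delta)$ for all $\beta\in(0,1]$, which implies both \eqref{est_ztl_b1} and \eqref{est_ztl_b01} and is in fact strictly sharper than \eqref{est_ztl_b01} when $\beta<1$ and $\delta>0$: the $h^{1/2}$ never appears. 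The paper's detour through $y_k$ isolates the quantity $\|y_k-\tilde y_k\|$, which is useful elsewhere (Propositions~\ref{prop_Euler_est_ex}--\ref{prop_Euler_est_inex}), but for the theorem as stated your argument is both simpler and tighter; it also shows that the restriction $n\ge\lceil 2(b-a)\rceil+1$ is needed in your proof only to ensure $h\le h^\alpha$ in the $\beta=1$ case, whereas the paper genuinely uses $h<\tfrac12$ to invert $1-h$ in the quadratic estimate.
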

\begin{proof} By (G2) we have for all $k=0,1,\dots, n-1$ that
\begin{displaymath}
     \|\tilde y_{k+1}\| \leq \|\tilde y_k\| + h(K+1) (1+ \|\tilde y_k\|) \notag \\
     \leq (1 + h(K+1)) \|\tilde y_k\| + h(K+1). 
\end{displaymath}
Therefore, by a discrete version of Gronwall's lemma 
%and by inequality $1+x \leq e^x$ for all $x \in \R$ 
we get for $k=0,1,\ldots,n-1$, $n\in\mathbb{N}$, $\delta\in [0,1]$ that
\begin{eqnarray}
\label{est_tilyk}
     &&\|\tilde y_k\| \leq (1+h(K+1))^k \|\tilde y_0\| + (1+h(K+1))^k -1 \notag \\
     &&\leq e^{(b-a)(K+1)} (\|\tilde \xi\| +1)\leq K_2 (1+\|\xi\|)
\end{eqnarray}
where $K_2=K_2(a,b,K)= 2e^{(b-a)(K+1)}$. Since $K_2\geq C_1$, by \eqref{euler_sol_est_1},\eqref{est_tilyk}  we have
\begin{equation}
\label{ball_2}
    y_k,\tilde y_k\in B_0(K_2(1+\|\xi\|))
\end{equation}
for all $k=0,1,\ldots,n-1$, $n\in\mathbb{N}$, $\delta\in [0,1]$.
Let us denote $e_k = \tilde y_k - y_k$ for $k = 0,1,\dots,n$. Then we have that  $\|e_0\| = \|\tilde y_0 - y_0\| = \|\tilde \xi - \xi\| \leq \delta$  and for $k = 0,1,\dots, n-1$
\begin{equation}
\label{ek_decomp_1}
     e_{k+1} =e_k + h \left[ g(t_k,\tilde y_k) - g(t_k, y_k)\right] + h \delta_g(t_k,\tilde y_k).
\end{equation}
We consider two cases, when $\beta=1$ and $\beta\in (0,1)$.

If $\underline{\beta=1}$ then (G4) implies that for $U=B_0(K_2(1+\|\xi\|))$ there exist $L_2\in (0,+\infty)$  such that for $k=0,1,\ldots,n-1$, $n\in\mathbb{N}$, $\delta\in [0,1]$
\begin{equation}
    \|e_{k+1}\|\leq (1+hL_2)\|e_k\|+h\delta(1+\|\xi\|)(1+K_2).
\end{equation}
By the discrete version of the Gronwall's lemma we get for all $n\in\mathbb{N}$, $\delta\in [0,1]$
\begin{equation}
\label{yktyk_est_bet_1}
    \max\limits_{0\leq k\leq n}\|y_k-\tilde y_k\|\leq (1+\|\xi\|)\Bigl(e^{L_2(b-a)}+\frac{e^{L_2(b-a)}-1}{L_2}(1+K_2)\Bigr)\delta.
\end{equation}
Hence, by Theorem \ref{est_euler_exact} we get that there exists $\tilde C_1\in [0,+\infty)$ such that for all $t\in [t_k,t_{k+1}]$, $k=0,1,\ldots,n$, $n\in\mathbb{N}$, $\delta\in [0,1]$
\begin{equation}
\label{est_ztl_beta_1}
    \|z(t)-\tilde l_n(t)\|\leq \tilde C_1 (1+\|\xi\|) h^\alpha+\|\tilde l_{k,n}(t)-l_{k,n}(t)\|,
\end{equation}
and, by \eqref{yktyk_est_bet_1},
\begin{eqnarray}
&&\|\tilde l_{k,n}(t)-l_{k,n}(t)\|\leq (1+hL_2)\|y_k-\tilde y_k\|+h\delta(1+\|\tilde y_k\|)\notag\\
\label{beta_1}
&&\leq \tilde C_1 (1+\|\xi\|)\delta+h\delta (1+\|\xi\|)(1+K_2).
\end{eqnarray}
From \eqref{est_ztl_beta_1}, \eqref{beta_1} we get \eqref{est_ztl_b1}.

If $\underline{\beta\in (0,1)}$ then we have to proceed in a different way, see also Remark \ref{upper_b_euler_inexact}. From \eqref{ek_decomp_1} we have for $k=0,1,\ldots,n-1$ that 
\begin{equation}
\label{rownosc_kwadratow_ode_noisy}
     \|e_{k+1} - h \delta_g(t_k,\tilde y_k)\|^2 = \|e_k + h \left[ g(t_k,\tilde y_k) - g(t_k, y_k)\right] \|^2.
\end{equation}
We have
\begin{eqnarray}
     &&\|e_{k+1} - h \delta_g(t_k,\tilde y_k)\|^2 = \|e_{k+1}\|^2 - 2h \langle e_{k+1}, \delta_g(t_k,\tilde y_k) \rangle + h^2 \|\delta_g(t_k,\tilde y_k)\|^2 \notag \\
     &&\geq \|e_{k+1}\|^2 - 2h \langle e_{k+1}, \delta_g(t_k,\tilde y_k) \rangle. \label{rownosc_kwadratow_L}
\end{eqnarray}
On the other hand, \eqref{ball_2} and (G3), (G4) imply that for $U=B_0(K_2(1+\|\xi\|))$ there exist $L_2\in (0,+\infty)$, $H_2\in\mathbb{R}$ such that for $k=0,1,\ldots,n-1$, $n\in\mathbb{N}$, $\delta\in [0,1]$
\begin{eqnarray}
     &&\|e_k + h \left[ g(t_k,\tilde y_k) - g(t_k, y_k)\right] \|^2\notag\\
     &&= \|e_k\|^2 + 2h \left\langle e_k,  g(t_k,\tilde y_k) - g(t_k, y_k) \right\rangle + h^2 \| g(t_k,\tilde y_k) - g(t_k, y_k)\|^2 \notag \\
    %&\leq& \|e_k\|^2 + 2h H_+ \|\tilde y_k- y_k\|^2 + h^2 L^2 \|\tilde y_k- y_k\|^{2\beta} \notag \\
     &&\leq \|e_k\|^2 + 2h H_2^+ \|e_k\|^2 + h^2 L_2^2 \|e_k\|^{2\beta} 
     \leq (1 + 2h H_2^+) \|e_k\|^2 + h^2 L_2^2 (1+ \|e_k\|^2) \notag \\
    % &\leq& (1 + 2h H_+ + h^2 L^2) \|e_k\|^2 + h^2 L^2 \notag \\
     &&\leq (1 + K_3 h) \|e_k\|^2 + h^2 L^2, \label{rownosc_kwadratow_R}
\end{eqnarray}
where $K_3=(2H_2^++L_2^2)$. (We have also used the fact that $x^{2\beta}=(x^2)^{\beta}\leq 1+x^2$ for all $x\in\mathbb{R}$, $\beta\in (0,1]$.) Combining \eqref{rownosc_kwadratow_ode_noisy}, \eqref{rownosc_kwadratow_L} and \eqref{rownosc_kwadratow_R} we get
%\begin{displaymath}
%    \|e_{k+1}\|^2 - 2h \langle e_{k+1}, \delta_g(t_k,\tilde y_k) \rangle \leq (1 + C_1 h) \|e_k\|^2 + 2 h^2 L^2.
%\end{displaymath}
%Hence 
\begin{equation}
\label{error_inequality}
     \|e_{k+1}\|^2 \leq (1 + K_3 h) \|e_k\|^2 + h^2 L_2^2 + 2h \langle e_{k+1}, \delta_g(t_k,\tilde y_k) \rangle.
\end{equation}
Moreover,
\begin{equation}
     \langle e_{k+1}, \delta_g(t_k,\tilde y_k) \rangle 
     %|\langle e_{k+1}, \delta_g(t_k,\tilde y_k) \rangle| \notag \\
     \leq \|e_{k+1}\| \cdot \|\delta_g(t_k,\tilde y_k) \| \leq\frac{1}{2}\|e_{k+1}\|^2+\delta^2(1+K_2^2)(1+\|\xi\|)^2.
     %\notag \\
     %&\leq& \frac{1}{2} \|e_{k+1}\|^2 + \frac{1}{2} \|\delta_g(t_k,\tilde y_k) \|^2 \notag \\
     %&\leq& \frac{1}{2} \|e_{k+1}\|^2 + \frac{1}{2} \delta^2 (1+ \|\tilde y_k\|)^2 \notag \\
     %&\leq& \frac{1}{2} \|e_{k+1}\|^2 + \frac{1}{2} \delta^2 (1+ C_5)^2 \notag \\
     %&\leq& \frac{1}{2} \|e_{k+1}\|^2 + \delta^2 (1+ C_5^2)
     \label{error_inequality_dot_prod}
\end{equation}
Combining \eqref{error_inequality} with \eqref{error_inequality_dot_prod} we end up with
\begin{equation}
     \|e_{k+1}\|^2 \leq (1 + K_3 h) \|e_k\|^2 + h^2 L_2^2 + h  \|e_{k+1}\|^2 + 2h\delta^2 (1+K_2^2)(1+\|\xi\|)^2.
     %\notag \\
     %&\leq& (1 + C_1 h) \|e_k\|^2 + 2 h^2 L^2 + h \|e_{k+1}\|^2 + 2h \delta^2 (1+ C_5^2) \notag \\
     %&\leq& (1 + C_1 h) \|e_k\|^2 + C_7 h^2 + h \|e_{k+1}\|^2 + C_6 h \delta^2  \notag
\end{equation}
%where $C_6 = C_6(a,b,K) = 2 (1+ (2+K)^2 e^{2(b-a)(K+1)})$ and $C_7 = C_7(L) = 2 L^2$. Hence
Since $h \in (0,\frac{1}{2})$, we have  $0 < \frac{1}{1-h} \leq 1+2h \leq 2$ and for all $k = 0,1,\dots, n-1$
%\begin{equation}
%     (1-h) \|e_{k+1}\|^2 \leq (1 + C_1 h) \|e_k\|^2 + C_7 h^2 + C_6 h \delta^2.
%\end{equation}
%Let us assume that $h \in (0,\frac{1}{2})$. Then $0 < \frac{1}{1-h} \leq 1+2h \leq 2$ so
\begin{eqnarray}
     &&\|e_{k+1}\|^2 \leq  (1+2h)(1 + K_3 h) \|e_k\|^2 + K_4 (1+\|\xi\|)^2 h(h+ \delta^2)\notag\\
     &&\leq (1+K_5h)\|e_k\|^2 + K_4 (1+\|\xi\|)^2 h(h+ \delta^2),
\end{eqnarray}
 where $K_4=2\max\{L_2^2,1+K_2^2\}$, $K_5=2+3K_3>0$. By the discrete version of the Gronwall's lemma we obtain for all $k = 0,1,\dots, n$
\begin{eqnarray}
     &&\|e_k\|^2 \leq (1 + K_5 h)^k \|e_0\|^2 + \frac{(1 + K_5 h)^k - 1}{K_5} K_4 (1+\|\xi\|)^2 (h+ \delta^2) \notag \\
     &&\leq e^{K_5 (b-a)} \delta^2 + \frac{e^{K_5(b-a)} - 1}{K_5} K_4 (1+\|\xi\|)^2 (h+ \delta^2) \notag \\
     &&\leq K_6(1+\|\xi\|)^2 (h+\delta^2),
\end{eqnarray}
where $K_6=2\max\{e^{K_5(b-a)},\frac{e^{K_5(b-a)}-1}{K_5}K_4\}$. Therefore, for all  $n\in\mathbb{N}$, $\delta\in [0,1]$
\begin{equation}
\label{est_yktyk}
    \max_{k = 0,1,\dots,n} \|y_k - \tilde y_k\| \leq K_7(1+\|\xi\|)(h^{\frac{1}{2}}+\delta),
\end{equation}
with $K_7=K_6^{1/2}$. Again, by Theorem \ref{est_euler_exact} we get that there exists $\tilde C_2,\tilde C_3\in [0,+\infty)$ such that for all $t\in [t_k,t_{k+1}]$, $k=0,1,\ldots,n$, $n\in\mathbb{N}$, $\delta\in [0,1]$
\begin{equation}
\label{est_ztl_beta_11}
    \|z(t)-\tilde l_n(t)\|\leq \bar C_1 (1+\|\xi\|) (h^\alpha+h^{\beta})+\|\tilde l_{k,n}(t)-l_{k,n}(t)\|,
\end{equation}
and, by \eqref{est_yktyk},
\begin{eqnarray}
&&\|\tilde l_{k,n}(t)-l_{k,n}(t)\|\leq \|y_k-\tilde y_k\|+hL_2\|y_k-\tilde y_k\|^{\beta}+h\delta(1+\|\tilde y_k\|)\notag\\
\label{beta_11}
&&\leq \tilde C_2 (1+\|\xi\|)(h^{1/2}+\delta+h^{1+\frac{\beta}{2}}+h\delta^{\beta})+h\delta (1+\|\xi\|)(1+K_2)\notag\\
&&\leq \tilde C_3(1+\|\xi\|)(h^{1/2}+\delta).
\end{eqnarray}
Finally, from \eqref{est_ztl_beta_11}, \eqref{beta_11} we get \eqref{est_ztl_b01}.
%Since $\|z(t_k)-\tilde y_k\|\leq \|z(t_k)-y_k\|+\|\tilde y_k-y_k\|$ for all $k=0,1,\ldots,n$, by \eqref{yktyk_est_bet_1}, \eqref{est_yktyk} and Theorem \ref{est_euler_exact}  we get the thesis.
\end{proof}
%%%%%%%%%%%%%%%%%%%%%
\begin{rem}
\label{upper_b_euler_inexact}
\rm
 Note that if $\beta\in (0,1)$ then we can only get that
 \begin{equation}
     \|e_{k+1}\|\leq \|e_k\|+hL_2\|e_k\|^{\beta}+h\delta(1+\|\xi\|)(1+K_2)
 \end{equation}
 for $k=0,1,\ldots,n-1$. Since $\|e_k\|^{\beta}\leq 1+\|e_k\|$, it holds
 \begin{equation}
     \|e_{k+1}\|\leq (1+hL_2)\|e_k\| + hL_2+h\delta(1+\|\xi\|)(1+K_2),
 \end{equation}
 which in turns implies that
 \begin{equation}
     \max\limits_{0\leq k\leq n}\|e_k\|\leq e^{L_2(b-a)}\delta+\frac{e^{L_2(b-a)}-1}{L_2}(L_2+\delta)(1+\|\xi\|)(1+K_2)=O(1),
 \end{equation}
 as $\delta\to 0^+$.  This estimate is obviously too rough.
\end{rem}
%%%%%%%%%%%%%%%%%%%%%%
\section{Lower bounds and optimality of the Euler scheme in the worst-case setting}\label{sec_optimal}

In this section we provide optimality analysis of the Euler algorithm in the case of inexact information about the right-hand side function. We settle our problem in the worst-case model of error (see \cite{TWW88}), which, in particular, allows us to investigate lower error bounds.

Let $\alpha, \beta \in (0,1]$, $K,L\in (0,+\infty)$, $H\in\mathbb{R}$, $R\in [0,+\infty]$. We consider the class $F_R^{\alpha, \beta}=F^{\alpha,\beta}_R(a,b,d,K,H,L)$ of pairs $(\xi, g)$ that satisfy the following conditions:
\begin{itemize}
    \item [(A0)] $\|\xi\| \leq K$,
	\item [(A1)] $g\in C([a,b]\times\R^d;\R^d)$,
	\item [(A2)] $\|g(t,y)\|\leq K(1+\|y\|)$ for all $(t,y)\in [a,b]\times\R^d$,
	\item [(A3)] $\langle y_1-y_2, g(t,y_1)-g(t,y_2)\rangle \leq H \| y_1 - y_2 \|^2$ for all $t\in [a,b]$, $y_1,y_2\in B_0(R)$,
	\item [(A4)] $	\|g(t_1,y_1)-g(t_2,y_2)\|\leq L\Bigl( |t_1-t_2|^\alpha + \|y_1-y_2\|^\beta\Bigr)$ for all $t_1, t_2 \in [a,b]$, $y_1,y_2\in B_0(R)$.
\end{itemize}
We refer to  $\alpha, \beta \in (0,1]$, $-\infty<a<b<+\infty$, $d \in \N$, $K, L \in [0, \infty)$, $H \in \R$, $R \in [0, +\infty]$ as to the {\it parameters of the class}. Except for $a,b,d$ the parameters are not known and will not be used by the defined algorithms. 

The class $F^{\alpha,\beta}_{\infty}$ consists of functions $g$ that satisfy conditions (A3), (A4) globally on the whole domain $[a,b]\times\mathbb{R}^d$, while functions from $F^{\alpha,\beta}_0$ might not satisfy the one-sided Lipschitz condition and the $(\alpha,\beta)$-H\"older condition even locally.
We have that $F_{\infty}^{\alpha,\beta}\subset F_{R_1}^{\alpha,\beta}\subset F_{R_2}^{\alpha,\beta}\subset F_{0}^{\alpha,\beta}$ for all $0\leq R_2\leq R_1\leq +\infty$.

We assume that an algorithm may use exact or noisy evaluations of the function $g$. Consequently, we can have access to the function $g$ through its possibly perturbed values $\tilde g$ at given points $(t,y)$. To be more precise we consider the following set of corrupting functions
\begin{eqnarray}
    &&\mathcal{K}(\delta)=\{\tilde\delta:[a,b]\times\mathbb{R}^d\to\mathbb{R}^d \ : \  \tilde\delta-\hbox{Borel measurable}, \ \|\tilde\delta(t,y)\|\leq \delta(1+\|y\|)\notag\\
    &&\quad\quad\quad\quad\quad \hbox{for all} \ t\in [a,b], y\in\mathbb{R}^d\}.
\end{eqnarray}
Of course $\mathcal{K}(\delta_1)\subset\mathcal{K}(\delta_2)$ for all $0\leq \delta_1\leq \delta_2\leq 1$. We also set
\begin{equation}
    V_{(\xi,g)}(\delta)=B_{\xi}(\delta)\times V_g(\delta),
\end{equation}
where
\begin{equation}
    V_g(\delta)=\bigcup_{\tilde\delta\in\mathcal{K}(\delta)}\{g+\tilde\delta\}.
    %\{\tilde g:[a,b]\times\mathbb{R}\to\mathbb{R} \ : \ \exists_{\tilde\delta_g}\}
\end{equation}
%Let us proceed with the model of computation in details. 
For $0\leq\delta_1\leq\delta_2\leq 1$ we have $V_{(\xi,g)}(\delta_1)\subset V_{(\xi,g)}(\delta_2)$ and $V_{(\xi,g)}(0)=\{(\xi,g)\}$.

Let $(\xi, g)\in F_0^{\alpha, \beta}$ and $(\tilde\xi,\tilde g)\in V_{(\xi,g)}(\delta)$. 
%For given $n\in\N$ let $(t_0, y_0), (t_1, y_1), \dots, (t_n, y_n)$ be points which $g$ is evaluated and $\tilde g(t_0, y_0), \tilde g(t_1, y_1), \dots, \tilde g(t_n, y_n)$ be a possible noisy evaluation of $g$. 
A vector of noisy information about $(\xi, g)$ is of the following form
\begin{displaymath}
\mathcal N(\tilde \xi, \tilde g) = [ \tilde g(t_0, y_0), \tilde g(t_1, y_1), \dots, \tilde g(t_{i-1}, y_{i-1}), \tilde \xi ]
\end{displaymath}
where $i\in\mathbb{N}$ is a total number of noisy evaluations of $g$. The evaluation points $(t_k,y_k)$ can be chosen in adaptive way. This means that there exist  functions $\psi_j$ such that
\begin{equation}
    (t_0,y_0)=\psi_0(\tilde\xi),
\end{equation}
and for $j=1,2,\ldots,i-1$
\begin{equation}
    (t_j,y_j)=\psi_j\Bigl(\tilde g(t_0,y_0),\ldots, \tilde g(t_{j-1},y_{j-1}),\tilde\xi\Bigr).
\end{equation}
An algorithm that approximates $z$ and uses the vector of (noisy) information $\mathcal{N}$ is defined as 
\begin{equation}
\label{def_alg}
    \mathcal{A}(\tilde\xi,\tilde g)=\varphi(\mathcal N(\tilde \xi, \tilde g)),
\end{equation}
for some function
\begin{equation}
    \varphi:\mathbb{R}^{(i+1)d}\to B([a,b];\mathbb{R}^d),
\end{equation}
where $B([a,b];\mathbb{R}^d)$ is the linear space of all functions $l:[a,b]\to\mathbb{R}^d$ bounded in the supremum norm. For a given $n\in\mathbb{N}$ we define $\Phi_n$ as a class of all algorithms of the form \eqref{def_alg} for which the total number of  evaluations $i$ is at most $n$. For example, $\tilde l_n\in\Phi_n$.

For a fixed $(\xi,g)\in F^{\alpha,\beta}_0$ the error of $\mathcal{A}\in\Phi_n$ is defined as follows
\begin{equation}
    e(\mathcal{A},\xi,g,\delta)=\sup\limits_{(\tilde\xi,\tilde g)\in V_{(\xi,g)}(\delta)}\sup\limits_{a\leq t\leq b}\|z(\xi,g)(t)-\mathcal{A}(\tilde\xi,\tilde g)(t)\|,
\end{equation}
while the worst-case error of the algorithm $\mathcal{A}$ in a subclass  $\mathcal{G}\subset F^{\alpha,\beta}_0$
\begin{equation}
    e(\mathcal{A},\mathcal{G},\delta)=\sup\limits_{(\xi,g)\in \mathcal{G}}e(\mathcal{A},\xi,g,\delta).
\end{equation}
We investigate behavior of the $n$th minimal error defined as below
\begin{equation}
    e_n(\mathcal{G},\delta)=\inf_{\mathcal{A}\in\Phi_n}e(\mathcal{A},\mathcal{G},\delta).
\end{equation}

The following worst-case results in the class $F^{\alpha,\beta}_R$ (with  suitable $R$'s)  for the Euler algorithm $\tilde l_n$, follow from the proofs of Theorems \ref{est_euler_exact}, \ref{Euler_error_inexact}. Note that the error bounds in the Theorems \ref{est_euler_exact}, \ref{Euler_error_inexact} are shown for a fixed $(\xi,g)$, while now we are considering the worst-case error in the class $F^{\alpha,\beta}_R$. For the clarity we present  upper bounds on the error of the Euler algorithm in the case of exact and inexact information separately.
\begin{prop} (The case of exact information)
\label{prop_Euler_est_ex}
    Let $R_1=e^{K(b-a)}(1+K) \left( e^{K(b-a)} + K(b-a) \right)$. Then there exists $C\in (0,+\infty)$, depending only on the parameters of the class $F^{\alpha,\beta}_{R_1}$, such  for all $(\xi, g) \in F_{R_1}^{\alpha, \beta}$, $n \in \mathbb{N}$ it holds
\begin{equation}\label{main_prep}
    \sup\limits_{a\leq t\leq b} \|z(t) -l_n(t) \| \leq C(h^\alpha + h^\beta).
\end{equation} 
\end{prop}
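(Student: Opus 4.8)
The plan is to re-examine the proof of Theorem~\ref{est_euler_exact} and observe that every constant produced there was tracked explicitly as a function of the parameters $a,b,K,\beta$ (and, through the one-sided Lipschitz and H\"older constants $H_U$, $L_U$ of a \emph{specific} ball $U$), so the only work is to check that this ball can be chosen uniformly over the whole class $F_{R_1}^{\alpha,\beta}$. First I would recall from \eqref{euler_sol_est_1} and \eqref{est_zk_local_1} that the Euler iterates $y_k$, the local solutions $z_k(t)$, and the exact solution $z(t)$ all lie in $B_0(C_2(1+\|\xi\|))$ with $C_2=C_2(a,b,K)$; since (A0) gives $\|\xi\|\le K$, this ball is contained in $B_0(C_2(1+K))$. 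A direct computation shows $C_2(1+K)=e^{K(b-a)}[e^{K(b-a)}+K(b-a)](1+K)=R_1$, so all the relevant points lie in the fixed ball $U=B_0(R_1)$, and conditions (A3), (A4) for the class $F_{R_1}^{\alpha,\beta}$ supply a single $H=H(R_1)$ and a single $L=L(R_1)$ valid simultaneously for every $(\xi,g)$ in the class.

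Next I would re-run the estimate of Theorem~\ref{est_euler_exact} verbatim with $H_U$ replaced by $H$ and $L_U$ replaced by $L$: the bound \eqref{error_decomp_0} becomes $\|z_k(t_{k+1})-y_{k+1}\|\le L C_4 (1+K) h(h^\alpha+h^\beta)$, the Gronwall step \eqref{error_decomp_2} uses $H^+$, and combining as in \eqref{est_ek_Euler_1}--\eqref{est_linear_Euler_2} yields $\sup_{a\le t\le b}\|z(t)-l_n(t)\|\le C(h^\alpha+h^\beta)$ with
\[
C=\Bigl(\tfrac{e^{H^+(b-a)}-1}{H^+}\,\mathbf{1}_{(0,\infty)}(H^+)+(b-a)\,\mathbf{1}_{\{0\}}(H^+)+1\Bigr)\,e^{H^+(b-a)}\,L\,C_4\,(1+K),
\]
which depends only on $a,b,d,K,H,L,\beta$, i.e.\ only on the parameters of the class $F_{R_1}^{\alpha,\beta}$. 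The factor $(1+\|\xi\|)$ appearing in Theorem~\ref{est_euler_exact} has been absorbed into $C$ using $\|\xi\|\le K$.

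The only genuinely substantive point — and the one I would state carefully — is the identity $C_2(a,b,K)\cdot(1+K)=R_1$, which is exactly why $R_1$ was defined the way it is in the proposition: it guarantees that the a~priori ball containing all trajectories for \emph{every} admissible $(\xi,g)$ is precisely $B_0(R_1)$, so that the \emph{local} hypotheses (A3), (A4) (which only hold on $B_0(R)$) are strong enough to make the Euler analysis go through. I do not anticipate a real obstacle here; the argument is a uniformization of an already-tracked constant, and the proof reduces to citing the proof of Theorem~\ref{est_euler_exact} with the ball $U=B_0(R_1)$ fixed once and for all and then invoking (A0) to replace $1+\|\xi\|$ by $1+K$.
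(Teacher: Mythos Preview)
Your proposal is correct and matches the paper's own treatment: the paper does not give a separate proof of this proposition but simply states that it ``follow[s] from the proofs of Theorems~\ref{est_euler_exact}, \ref{Euler_error_inexact}'', and your write-up is precisely the uniformization argument this entails. In particular, the identification $C_2(1+K)=R_1$ that you highlight is exactly why $R_1$ is defined as it is, so that the fixed ball $B_0(R_1)$ captures all trajectories and lets the local assumptions (A3), (A4) stand in for (G3), (G4) with class-uniform constants.
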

%%%%%%%%%%%%%%%%%%
\begin{prop} (The case of inexact information)
\label{prop_Euler_est_inex}
    Let $R_2=\max\{R_1,K_2(1+K)\}=e^{K(b-a)}(1+K)\max\{e^{K(b-a)}+K(b-a),2\}$. Then there exists $C\in (0,+\infty)$, depending only on the parameters of the class $F^{\alpha,\beta}_{R_2}$, such that for all $n\geq \lceil 2(b-a)\rceil+1$, $\delta\in [0,1]$, $(\xi,g)\in F^{\alpha,\beta}_{R_2}$, $(\tilde\xi,\tilde g)\in V_{(\xi,g)}(\delta)$ the following holds
    \begin{itemize}
        \item if $\beta=1$ then
        \begin{equation}
        \label{beta_1_prep}
    \sup\limits_{a\leq t\leq b} \|z(t) -\tilde l_n(t) \| \leq  C(h^{\alpha}+\delta).
\end{equation}
        \item if $\beta\in (0,1)$ then
\begin{equation}
\label{beta_01}
    \sup\limits_{a\leq t\leq b} \|z(t) -\tilde l_n(t) \| \leq  C\Bigl(h^{\alpha}+h^{\beta}+\mathbf{1}_{(0,1]}(\delta)\cdot (h^{1/2}+\delta)\Bigr).
\end{equation} 
\end{itemize}
\end{prop}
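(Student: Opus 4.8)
The plan is to revisit the proofs of Theorems~\ref{est_euler_exact} and~\ref{Euler_error_inexact} and to observe that, once $(\xi,g)$ is restricted to the class $F^{\alpha,\beta}_{R_2}$, every constant produced there can be taken to depend only on the parameters $\alpha,\beta,a,b,d,K,H,L$. The single new ingredient is a containment check: one must verify that all arguments of $g$ at which the local conditions (G3),(G4) were invoked in those proofs stay inside the ball $B_0(R_2)$, so that the fixed constants $H$ and $L$ provided by (A3) and (A4) on $B_0(R_2)$ may be used in place of the \emph{a priori $\|\xi\|$-dependent} constants $H_U,L_U$ that (G3),(G4) only guarantee on a compact set~$U$.

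First I would fix $(\xi,g)\in F^{\alpha,\beta}_{R_2}$ and invoke (A0) to get $\|\xi\|\le K$, whence $1+\|\xi\|\le 1+K$ and, in the noisy case, $\|\tilde\xi\|\le\|\xi\|+\delta\le 1+K$. I would then rerun the a priori bounds of the two proofs: as in \eqref{euler_sol_est_1} and \eqref{est_zk_local_1}, the Euler iterates and the local solutions $z_k$ satisfy $y_k,z_k(t)\in B_0(C_2(1+\|\xi\|))$ with $C_2=e^{K(b-a)}[e^{K(b-a)}+K(b-a)]$, while $z(t)\in B_0(C_0(1+\|\xi\|))\subseteq B_0(C_2(1+\|\xi\|))$ by Lemma~\ref{odes_sol_ex1} and $C_0\le C_2$; and in the corrupted scheme \eqref{est_tilyk} gives $y_k,\tilde y_k\in B_0(K_2(1+\|\xi\|))$ with $K_2=2e^{(b-a)(K+1)}$. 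Since $C_2(1+\|\xi\|)\le C_2(1+K)=R_1$ and $K_2(1+\|\xi\|)\le K_2(1+K)$, the choice $R_2=\max\{R_1,K_2(1+K)\}$ forces all of these points into $B_0(R_2)$, uniformly over the class. Consequently every use of (G3) resp.\ (G4) in the two proofs --- made precisely on $U=B_0(C_2(1+\|\xi\|))$ or $U=B_0(K_2(1+\|\xi\|))$ --- may instead be carried out via (A3) resp.\ (A4) on the fixed ball $B_0(R_2)$, i.e.\ with $H_1=H_2=H$ and $L_1=L_2=L$, which are independent of $\xi$.

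With this substitution the remaining estimates are word for word those of Theorems~\ref{est_euler_exact} and~\ref{Euler_error_inexact}; in particular the invocations of Theorem~\ref{est_euler_exact} inside the proof of Theorem~\ref{Euler_error_inexact} (at \eqref{est_ztl_beta_1} and \eqref{est_ztl_beta_11}) now supply a class-uniform $\bar C_1$. The hypothesis $n\ge\lceil 2(b-a)\rceil+1$ forces $h=(b-a)/n<1/2$, which is exactly what the $\beta\in(0,1)$ branch used (the step $0<\frac{1}{1-h}\le 1+2h\le 2$). Each of $C_1,\dots,C_4,L_1,H_1$ in the exact case and $K_2,\dots,K_7,L_2,H_2,\tilde C_1,\tilde C_2,\tilde C_3$ in the inexact case is then a function of $\alpha,\beta,a,b,d,K,H,L$ only, and every factor $(1+\|\xi\|)$ that appears (as in \eqref{est_ek_Euler_1}, \eqref{yktyk_est_bet_1}, \eqref{est_yktyk}) is $\le 1+K$ and is absorbed into the final constant. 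Taking $\sup_{a\le t\le b}$ and then the supremum over $(\xi,g)\in F^{\alpha,\beta}_{R_2}$ and over $(\tilde\xi,\tilde g)\in V_{(\xi,g)}(\delta)$ yields \eqref{beta_1_prep} when $\beta=1$ and \eqref{beta_01} when $\beta\in(0,1)$, with one $C=C(\alpha,\beta,a,b,d,K,H,L)$.

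I expect the only delicate point to be the uniformity step of the second paragraph: one has to be certain that in both proofs every pair of arguments at which (G3) or (G4) was applied has both components in $B_0(R_2)$ --- that is, that the $\xi$-indexed balls $B_0(C_2(1+\|\xi\|))$ and $B_0(K_2(1+\|\xi\|))$ are genuinely the sets on which those conditions were used and that $R_2$ dominates their radii for every $\|\xi\|\le K$. Once this is confirmed, replacing $H_U,L_U$ by the class constants $H,L$ removes the only hidden $\|\xi\|$-dependence, and everything else is the bookkeeping already carried out in the two theorems.
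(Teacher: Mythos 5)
Your proposal is correct and is exactly the argument the paper intends: the paper states that Proposition~\ref{prop_Euler_est_inex} follows from the proofs of Theorems~\ref{est_euler_exact} and~\ref{Euler_error_inexact}, and your containment check (all points where (G3)/(G4) were invoked lie in $B_0(C_2(1+\|\xi\|))$ resp.\ $B_0(K_2(1+\|\xi\|))\subseteq B_0(R_2)$ once $\|\xi\|\le K$, so $H_U,L_U$ can be replaced by the class constants $H,L$ and every factor $1+\|\xi\|$ by $1+K$) is precisely the uniformity argument that turns those fixed-$(\xi,g)$ bounds into worst-case bounds over $F^{\alpha,\beta}_{R_2}$. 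No gap; this matches the paper's route, merely spelled out in more detail.
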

%%%%%%%%%%%%%%
In the special case when $\beta=1$ we get the following sharp bounds on the $n$th minimal error in the class $F^{\alpha,1}_{R_2}$.
\begin{thm}
\label{optimal_beta1}
Let $\alpha\in (0,1]$, $\beta=1$ and $R_2$ defined as in Proposition \ref{prop_Euler_est_inex}. Then it holds
    \begin{equation}
    \label{sharp_en}
        e_n(F^{\alpha,1}_{R_2},\delta)=\Theta(n^{-\alpha}+\delta),
    \end{equation}
    as $n\to +\infty$, $\delta\to 0^+$. The  Euler algorithm $\tilde l_n$ is  the optimal one.
\end{thm}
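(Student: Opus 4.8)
The plan is to establish the matching upper and lower bounds on $e_n(F^{\alpha,1}_{R_2},\delta)$ separately; the optimality of $\tilde l_n$ then follows at once, since $\tilde l_n\in\Phi_n$ attains the lower bound up to a constant. For the upper bound it suffices to invoke Proposition~\ref{prop_Euler_est_inex} with $\beta=1$: writing $h=(b-a)/n$, the estimate \eqref{beta_1_prep} gives $\sup_{a\le t\le b}\|z(t)-\tilde l_n(t)\|\le C\bigl((b-a)^\alpha n^{-\alpha}+\delta\bigr)$ uniformly over $(\xi,g)\in F^{\alpha,1}_{R_2}$ and $(\tilde\xi,\tilde g)\in V_{(\xi,g)}(\delta)$ once $n\ge\lceil 2(b-a)\rceil+1$, hence $e_n(F^{\alpha,1}_{R_2},\delta)\le e(\tilde l_n,F^{\alpha,1}_{R_2},\delta)=O(n^{-\alpha}+\delta)$. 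So the crux is the lower bound $e_n(F^{\alpha,1}_{R_2},\delta)\ge c(n^{-\alpha}+\delta)$; since $\max(A,B)\ge\tfrac12(A+B)$ for $A,B\ge0$, I would prove the two estimates $e_n(F^{\alpha,1}_{R_2},\delta)\ge c_1 n^{-\alpha}$ (valid already for exact information, i.e. for all $\delta\ge0$) and $e_n(F^{\alpha,1}_{R_2},\delta)\ge c_2\delta$ separately, both by the standard ``two indistinguishable instances'' argument.

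For the $\delta$-term, fix $\mathcal A\in\Phi_n$ and a base field $g_0$ lying in the class together with $\xi=0$ (take $g_0\equiv 0$ if $H\ge0$, and a suitable dissipative linear field otherwise, which exists for the admissible parameters). With $\tilde\xi=0$, consider the two pairs $(\delta v,g_0)$ and $(-\delta v,g_0)$, where $v$ is a fixed unit vector: both belong to $F^{\alpha,1}_{R_2}$ for $\delta$ small, and both are consistent with the observed datum $\tilde\xi=0$ together with noiseless evaluations of $g_0$. Since $\mathcal A$ receives the same information vector in both cases it outputs the same function $l^\ast$, whereas $z^{(1)}(a)=\delta v$ and $z^{(2)}(a)=-\delta v$, so $\sup_t\|z^{(1)}(t)-z^{(2)}(t)\|\ge 2\delta$ and hence $\max_i\sup_t\|z^{(i)}-l^\ast\|\ge\delta$. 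This yields $e(\mathcal A,F^{\alpha,1}_{R_2},\delta)\ge\delta$, and therefore $e_n(F^{\alpha,1}_{R_2},\delta)\ge c_2\delta$.

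For the $n^{-\alpha}$-term, fix $\mathcal A\in\Phi_n$ and run it on the base input $(0,g_0)$; this produces a deterministic list of evaluation nodes $(s_0,y_0^\ast),\dots,(s_{n-1},y_{n-1}^\ast)$. Sorting the $s_j$ splits $[a,b]$ into at most $n+1$ gaps whose lengths sum to $b-a$, so by convexity of $x\mapsto x^{1+\alpha}$ one has $\sum_i(\mathrm{gap}_i)^{1+\alpha}\ge(b-a)^{1+\alpha}(n+1)^{-\alpha}$. In the interior of each gap I would place a nonnegative bump $\phi_i$, supported in that gap and vanishing at its endpoints, with $\alpha$-Hölder constant $\le\varepsilon_0 L$ and $\int\phi_i\gtrsim\varepsilon_0 L(\mathrm{gap}_i)^{1+\alpha}$; for $\varepsilon_0$ small enough, $g_1:=g_0+\phi v$ and $g_2:=g_0-\phi v$ with $\phi=\sum_i\phi_i$ both lie in $F^{\alpha,1}_{R_2}$ — they inherit the one-sided Lipschitz constant of $g_0$ (a constant-in-$y$ perturbation does not change it), the linear growth constant stays $\le K$, and the $t$-Hölder constant stays $\le L$. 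Because $\phi$ vanishes at every $s_j$, an induction on $j$ shows that on the inputs $(0,g_1)$ and $(0,g_2)$ the algorithm regenerates exactly the nodes $s_j$ and reads off exactly the base values $g_0(s_j,y_j^\ast)$, so it returns the same function $l^\ast$ for both. The solutions satisfy $z^{(i)}(t)=z_{g_0}(t)\pm\bigl(\int_a^t\phi\bigr)v$ when $g_0\equiv0$, and differ by a comparable amount for a linear dissipative base by the variation-of-constants formula; at $t=b$ they are at least $2\int_a^b\phi=2\sum_i\int\phi_i\gtrsim\varepsilon_0 L(b-a)^{1+\alpha}(n+1)^{-\alpha}$ apart. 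Hence $\max_i\sup_t\|z^{(i)}-l^\ast\|\ge c_1 n^{-\alpha}$, and since enlarging $\delta$ only enlarges the information sets, $e_n(F^{\alpha,1}_{R_2},\delta)\ge c_1 n^{-\alpha}$ for every $\delta\ge0$.

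The main obstacle I anticipate is precisely this last construction: one must verify that the multi-bump perturbation stays in the class with the \emph{prescribed} constants $K$, $H$, $L$ rather than merely up to constants, handle the case $H<0$ (where $g_0$ cannot vanish, so one perturbs a genuinely dissipative base and needs the variation-of-constants lower bound), and — most importantly — upgrade the $O(n^{-\alpha})$ effect of a single bump to an $\Omega(n^{-\alpha})$ total effect via the convexity lower bound on $\sum_i(\mathrm{gap}_i)^{1+\alpha}$, all while carefully tracking that the adaptively chosen evaluation nodes really do coincide across the three inputs $(0,g_0)$, $(0,g_1)$, $(0,g_2)$.
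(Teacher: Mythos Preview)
Your proof is correct and follows the same strategy as the paper. Both take the upper bound directly from Proposition~\ref{prop_Euler_est_inex} and split the lower bound into an $\Omega(n^{-\alpha})$ piece (via $y$-independent perturbations of the right-hand side, which reduces the ODE problem to integration of $\alpha$-H\"older functions) and an $\Omega(\delta)$ piece (via two indistinguishable inputs). The differences are only in packaging: the paper outsources both lower bounds---citing Novak's classical integration lower bound for the $n^{-\alpha}$-part and Theorem~3 of \cite{TBPP2022} for the $\delta$-part---whereas you spell out the bump construction and the two-instance argument in full; and the paper's $\delta$-argument perturbs $g$ by a constant noise rather than perturbing $\xi$, which yields the slightly sharper constant $(b-a)\delta$ but is equivalent for the $\Theta$-statement. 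Your self-contained route buys independence from external references and makes the adaptivity tracking explicit; the paper's route is shorter. Your worry about the case $H<0$ is well-founded but applies equally to the paper: the subclass $\mathcal{G}^{\alpha,1}_{R_2}$ of $y$-independent right-hand sides used there is empty unless $H\ge 0$, so both arguments tacitly assume $H\ge 0$.
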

\begin{proof}
The upper bound follows from \eqref{beta_1_prep} in Proposition \ref{prop_Euler_est_inex} and the fact that $\tilde l_n\in\Phi_n$.

Now we turn to the lower bound. We consider the following class $$\mathcal{G}^{\alpha,1}_{R_2} = \{(0, g) \in F^{\alpha,1}_{R_2} \ | \  g(t,y) = g(t,0) \textrm{ for all } (t,y) \in [a,b]\times\R^d \},$$
which is a subclass of $F^{\alpha,1}_{R_2}$.
Moreover, for $(\xi,g)\in\mathcal{G}^{\alpha}_{R_2}$ we have that $\displaystyle{z(\xi,g)(t)=\int\limits_a^t g(s,0)ds}$, $t\in [a,b]$. 
Hence, 
\begin{equation}
\label{low_b_1}
    e_n(F^{\alpha,1}_{R_2},\delta)\geq  e_n(\mathcal{G}^{\alpha,1}_{R_2},0)\geq\inf_{\mathcal{A}\in\Phi_n}\sup\limits_{(0,g)\in\mathcal{G}^{\alpha,1}_{R_2}}\|z(0,g)(b)-\mathcal{A}(0,g)(b)\|=\Omega(n^{-\alpha}),
\end{equation}
which follows from the result on lower bound for a Riemann
integration problem of H\"older continuous functions under exact information, see Proposition 1.3.9, page 34 in \cite{NOV}. Moreover, using analogous argumentation as in the proof of Theorem 3 in \cite{TBPP2022} we get, in the case of inexact information, that
\begin{equation}
\label{low_b_2}
    e_n(F^{\alpha,1}_{R_2},\delta)\geq (b-a)\delta.
\end{equation}
From \eqref{low_b_1} and \eqref{low_b_2} we get the lower bound in \eqref{sharp_en}.
\end{proof}
%%%%%%%%%%%%%%%%%%%%%%
\begin{rem}
\rm
    By considering the autonomous problem $z'(t) = g(z(t))$, $z(a)=\xi$, $t\in [a,b]$, with globally Lipschitz continuous function $g$ we know, from the results of \cite{Kac2}, that we can only get the lower bound $\Omega(h)$. Hence, under our assumptions there is a gap between the upper bound $O(h^{\beta})$ and the lower bound $\Omega(h)$ - we conjecture that the optimal error bound is $\Theta (h^{\beta})$.
\end{rem}
%%%%%%%%%%%%%%%%%%%%%%%
\section{Numerical results}\label{sec_num_results}

We have conducted various numerical tests to further prove \eqref{main_prep}. The exact solutions for the analyzed problems are unknown, so we proceeded as follows to estimate the empirical rate of convergence. In parallel, we computed the approximated solution and a numerical reference solution with a smaller step size $h$ (1000 times smaller than in an approximated solution). Then we computed an error as a difference between those two solutions.

The theoretical convergence rate is known. We presented the test results in the log-log plots to test a numerical convergence rate. On the y axis there is $-\log_{10} \textrm{(err)}$ and on the x axis the $\log_{10}n$, where $\textrm{err}$ and $n$ are given. Then, the linear regression slope corresponds to the empirical convergence rate and is presented for the case with exact information.

We computed only one approximated solution to get the numerical convergence rate for the case without noise. To gain the numerical convergence rate for a noisy case, we conducted 200 Monte Carlo tries, and then we took the biggest value of the error to present in the plot to mimic the worst-case setting behavior.

\subsection{Additive example}
For all $(t,x)\in [a,b]\times\mathbb{R}$ let us consider the function
\begin{equation}
\label{exampl_g1}
    g(t,x)=A|t|^{\alpha}x\sin(x^2+1)+B_1\sgn(x) |x|^{\varrho_1}+B_2\sgn(x) |x|^{\varrho_2},
\end{equation}
where $A\in \mathbb{R}$, $B_1,B_2\in (-\infty,0]$, $\alpha,\varrho_1,\varrho_2\in (0,1]$. Note that the function \eqref{exampl_g1} satisfies assumptions (G1)-(G4), see Fact \ref{ex_g_g1g4}.

In the first experiment, we take the following values of the parameters $a=0$, $b=10$, $A=2$, $B_1=-2$, $B_2=-0.5$, $\alpha=1$ and $\varrho_1=1$. We manipulate with $\varrho_2$ and noise level $\delta$.

In Figure~\ref{pic:sol_alpha_1} we have several reference solutions for $\varrho_2 \in \{1.0, 0.75, 0.5, 0.25 \}$ and $\delta=0.0$. In Figure~\ref{pic:err_rho2} we have test results for $\delta=0$ (exact information) and $\delta \in \{ 0.1, 0.01, 0.001 \}$ (noisy information).

\pic{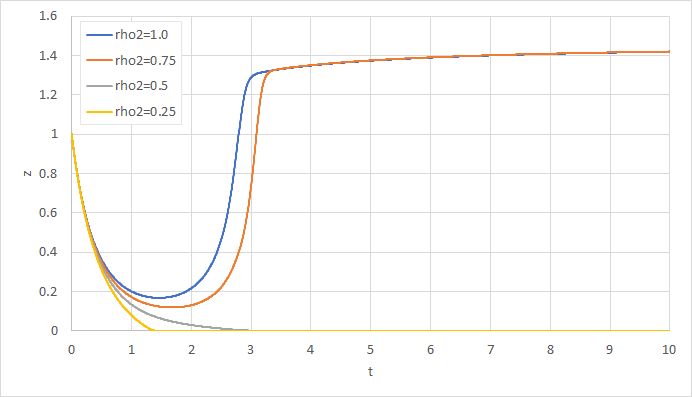}{0.8}{An approximated solution computed on the dense mesh, treated as the reference solution of the model \eqref{exampl_g1} with: $\varrho_2=1$, $\varrho_2=0.75$, $\varrho_2=0.5$, and $\varrho_2=0.25$.}{pic:sol_alpha_1}

\begin{figure}[!htbp]
	\centering
	\begin{subfigure}[t]{0.45\textwidth}
		\centering
		\includegraphics[height=3.5cm]{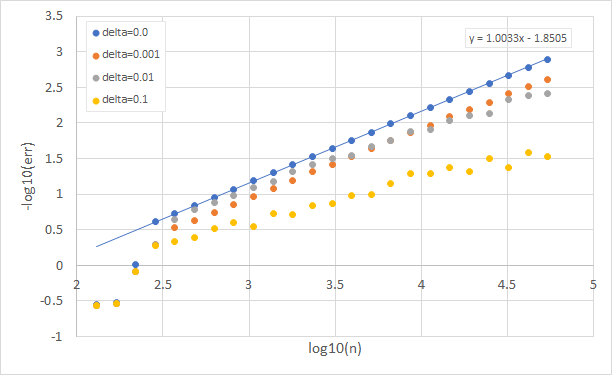}
		\caption{$-\log_{10} \textrm{(err)}$ vs. $\log_{10}n$ for $\varrho_2=1$.}\label{pic:err_rho2_1.0}		
	\end{subfigure}
	\quad \quad
	\begin{subfigure}[t]{0.45\textwidth}
		\centering
		\includegraphics[height=3.5cm]{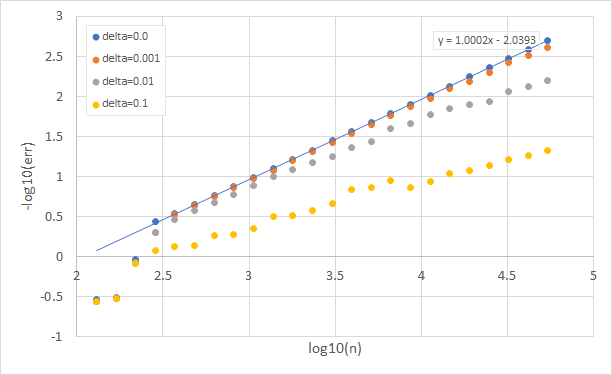}
		\caption{$-\log_{10} \textrm{(err)}$ vs. $\log_{10}n$ for $\varrho_2=0.75$.}\label{pic:err_rho2_0.75}
	\end{subfigure}		
	\begin{subfigure}[t]{0.45\textwidth}
		\centering
		\includegraphics[height=3.5cm]{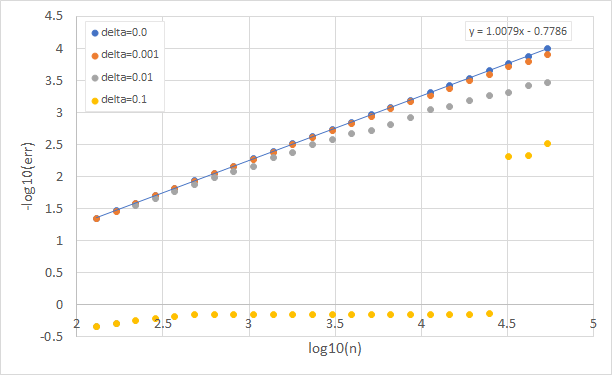}
		\caption{$-\log_{10} \textrm{(err)}$ vs. $\log_{10}n$ for $\varrho_2=0.5$.}\label{pic:err_rho2_0.5}	
	\end{subfigure}
	\quad
	\begin{subfigure}[t]{0.45\textwidth}
		\centering
		\includegraphics[height=3.5cm]{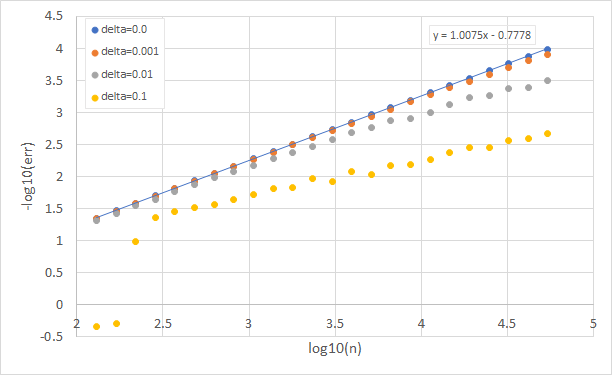}
		\caption{$-\log_{10} \textrm{(err)}$ vs. $\log_{10}n$ for $\varrho_2=0.25$. }\label{pic:err_rho2_0.25}
	\end{subfigure}
	\caption{$-\log_{10} \textrm{(err)}$ vs. $\log_{10}n$ for the model \eqref{exampl_g1}.}\label{pic:err_rho2}
\end{figure}
%
%In more general case, the function \eqref{exampl_g1} can take the following form
%\begin{equation}
%\label{exampl_g2}
%    g(t,x)=At^{\alpha} x \sin(x^2+1)+\sgn(x)\cdot\sum\limits_{k=1}^p B_k  |x|^{\varrho_k},
%\end{equation}
%$A\in\mathbb{R}$, $B_k\in (-\infty,0]$, $\varrho_k,\alpha\in (0,1]$ for $k=1,2,\ldots,p$, $p\in\mathbb{N}$.

In the next experiment we take the following values of the parameters: $a=0$, $b=10$, $A=2$, $B_1=-2$, $B_2=-0.5$, $\varrho_1=1$, $\varrho_2=0.75$ but $\alpha=0.4$. In Figure~\ref{pic:sol_rho_0.75} we compare reference solutions for $\alpha \in \{ 0.4, 1.0 \}$ with $\delta=0$. In Figure~\ref{pic:err_alpha} we have test results for $\delta=0$ and $\delta \in \{ 0.1, 0.01, 0.001 \}$.

From Proposition~\ref{prop_Euler_est_ex} and  Fact~\ref{ex_g_g1g4} we have the theoretical convergence rate is \linebreak $O\Bigl(h^{\alpha}+h^{\min \{\varrho_1, \varrho_2 \} }+\mathbf{1}_{(0,1]}(\delta)\cdot (h^{1/2}+\delta)\Bigr)$.
As we see in Figure~\ref{pic:err_rho2} and Figure~\ref{pic:err_alpha} the bigger $\delta$, the less steep are the empirical convergence rates.

\pic{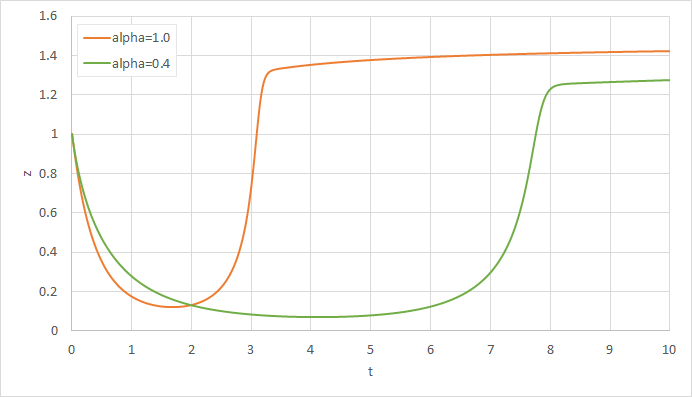}{0.8}{An approximated solution computed on the dense mesh, treated as the reference solution of the model \eqref{exampl_g1} with: $\alpha=0.4$, $\alpha=1.0$.}{pic:sol_rho_0.75}

\pic{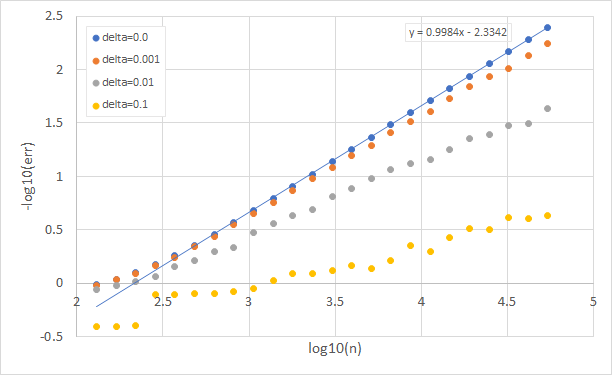}{0.8}{$-\log_{10} \textrm{(err)}$ vs. $\log_{10}n$ for the model \eqref{exampl_g1} with $\alpha=0.4$.}{pic:err_alpha}

\subsection{Multiplicative example}

Let us consider a family of the following functions
\begin{equation}
\label{exampl_g2}
    g(t,x)= \gamma(t) h(x) f(x), \quad (t,x)\in [a,b]\times\mathbb{R}
\end{equation}
where $h(x) = - \sgn(x) |x|^\varrho$, $\varrho\in (0,1]$, and for $\gamma:[a,b]\to\mathbb{R}$, $f:\mathbb{R}\to\mathbb{R}$ the following conditions are satisfied:
\begin{itemize}
    \item there exists $L_\gamma \geq 0$, $\alpha \in (0,1]$ such that for all $t, s \in [a,b]$
\begin{equation}
\label{example_g2_gamma1}
    |\gamma(t) - \gamma (s)| \leq L_\gamma |t-s|^\alpha,
\end{equation}
    \item for all $t \in [a, b]$
    \begin{equation}
    \label{example_g2_gamma2}
        \gamma(t)\geq 0,
    \end{equation}
    \item there exists $D\geq 0$ such that for all $x \in \R$
    \begin{equation}
    \label{example_g2_f1}
        0 \leq f(x) \leq D,
    \end{equation}
    \item there exists $L_f \geq 0$ such that for all $x, y \in \R$
    \begin{equation}
    \label{example_g2_f2}
        |f(x) - f(y)| \leq L_f |x-y|.
    \end{equation}
\end{itemize}
It turns out that the function \eqref{exampl_g2} satisfies the assumptions (G1)-(G4), see Fact \ref{ex_multipl_g1g4}.

For the testing purpose we take the following particular function
\begin{equation}
    g(t,x)=-0.3\cdot |\sin(\pi \cdot t+1)|^{2/3}\cdot \sgn(x) |x|^{2/3}\cdot |\cos(x)|,
 %   \gamma(t) &= 0.3 \cdot|\sin(\pi t +1)|^{\frac{2}{3}} \label{example_gamma}\\
 %   h(x)&= - \sgn(x) |x|^{\frac{2}{3}} \\
 %   f(x) &= |\cos(x)| \label{example_f}
\end{equation}
with  $\varrho = \frac{2}{3}$, $a=-3.5$, $b=3.5$, and with the initial-condition $\xi = 3$.
%, so the final equation is
%\begin{eqnarray}
%	\left\{ \begin{array}{ll}
%	z'(t)= - 0.3 |\sin(\pi t +1)|^{\frac{2}{3}} \cdot \sgn(x) |x|^{\frac{2}{3}} \cdot |\cos(x)|, \quad &t\in [-3.5, 3.5],\\
%	z(-3.5)=3.
%	\end{array} \right.
%\end{eqnarray}
%
%Note that \eqref{example_gamma} satisfies \eqref{example_g2_gamma1} (with $L_\gamma = 0.3 \pi^{\frac{2}{3}}$ and $\alpha=\frac{2}{3}$) and \eqref{example_g2_gamma2} conditions because for all $t, s \in [a,b]$
%\begin{align*}
%    |\gamma(t)-\gamma(s)| &= 0.3 \Bigl| |\sin(\pi t + 1)|^{\frac{2}{3}} - |\sin(\pi s + 1)|^{\frac{2}{3}}  \Bigr| \\
%    & \leq 0.3 \Bigl| |\sin(\pi t + 1)| - |\sin(\pi s + 1)|  \Bigr|^{\frac{2}{3}} \\
%    & \leq 0.3 |\pi t - \pi s|^{\frac{2}{3}} \\
%    & \leq 0.3 \pi^{\frac{2}{3}} |t-s|^{\frac{2}{3}}.
%\end{align*}
%Moreover, \eqref{example_f} satisfies \eqref{example_g2_f1} and \eqref{example_g2_f2} with $D=1$ and $L_f=1$.
%

In Figure~\ref{pic:sol_muliplicative} we show a reference solution for exact information ($\delta=0$). In Figure~\ref{pic:err_multiplicative} we present the test results for $\delta=0$ and $\delta \in \{ 0.1, 0.01, 0.001 \}$.

The reference solution in Figure~\ref{pic:sol_muliplicative} is a little bit bumpy, therefore we can expect that the bigger noise, the more bumpy is the solution and the more approximation steps must be made to obtain a satisfactory approximated solution, see Figure~\ref{pic:err_multiplicative}.

\pic{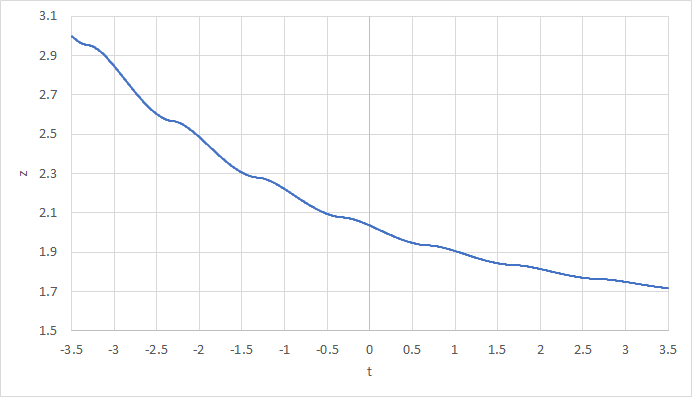}{0.8}{A reference solution of the model \eqref{exampl_g2}.}{pic:sol_muliplicative}

\pic{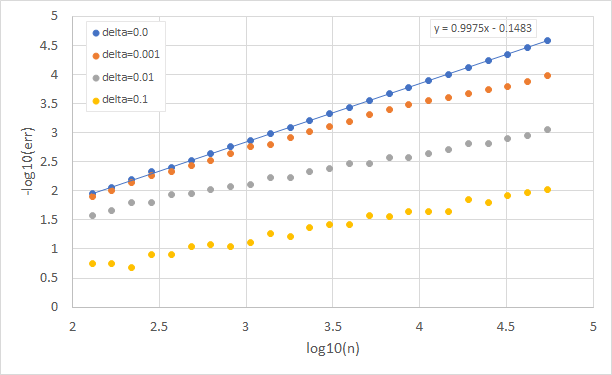}{0.8}{$-\log_{10} \textrm{(err)}$ vs. $\log_{10}n$ for the model \eqref{exampl_g2}.}{pic:err_multiplicative}

{\bf Acknowledgments}
We would like to thank two anonymous reviewers for valuable comments and suggestions that allowed us to improve the
quality of the paper.

\section{Appendix}\label{sec_appendix}
In this section we give some auxiliary facts about solution $z$ of the ODE \eqref{ode_gen} and properties of the right-hand side function \eqref{exampl_g1}. We use  the following version of Peano's theorem. 

\begin{thm}[Peano's theorem; \cite{gorn_ing}, page 292., Theorem 70.4] \label{peano}
If the function $g$ satisfies the assumptions (G1) and (G2), then the initial value problem \eqref{ode_gen} has at least one solution that belongs to $C^1([a,b]\times\mathbb{R}^d;\mathbb{R}^d)$.
\end{thm}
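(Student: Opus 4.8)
Since Theorem~\ref{peano} is quoted from \cite{gorn_ing}, one may simply invoke Theorem~70.4 there; for completeness I would give the following self-contained argument. The plan is to run the classical Cauchy--Euler polygon together with an Arzel\`a--Ascoli compactness argument, using the Euler polygons $l_n$ from Section~\ref{sec_euler_exact} as the approximate solutions. The only genuinely non-routine point is that (G1)--(G2) are here the \emph{global} hypotheses (continuity and linear growth on all of $[a,b]\times\R^d$), and it is precisely the linear growth bound that prevents the polygons from escaping to infinity in finite time and hence lets the compactness argument be carried out on the whole interval $[a,b]$ at once rather than only on a small subinterval.

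First I would establish a uniform a priori bound. A direct computation from (G2) gives $\|y_{k+1}\|\le(1+Kh)\|y_k\|+Kh$, so the discrete Gronwall inequality yields $\max_{0\le k\le n}\|y_k\|\le e^{K(b-a)}(1+\|\xi\|)=:\rho$ for every $n$. Consequently $l_n(t)\in U:=B_0(\rho+K(b-a)(1+\rho))$ for all $t\in[a,b]$ and all $n$, and since $l_n'(t)=g(t_k,y_k)$ on $(t_k,t_{k+1})$ with $\|g(t_k,y_k)\|\le K(1+\rho)=:M$ uniformly, the family $\{l_n\}$ is uniformly bounded and uniformly $M$-Lipschitz on $[a,b]$, hence equicontinuous.

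Next, by the Arzel\`a--Ascoli theorem there is a subsequence $(l_{n_j})$ converging uniformly on $[a,b]$ to some $z\in C([a,b];\R^d)$. I would write the polygon in integral form $l_n(t)=\xi+\int_a^t g(\eta_n(s))\,ds$, where $\eta_n(s)=(t_k,y_k)$ for $s\in[t_k,t_{k+1})$, and note that $\|\eta_n(s)-(s,z(s))\|\le(1+M)\frac{b-a}{n}+\|l_n-z\|_{\infty,[a,b]}$, which tends to $0$ along the subsequence, uniformly in $s$. Since $g$ is uniformly continuous on the compact set $[a,b]\times U$, this forces $g(\eta_{n_j}(\cdot))\to g(\cdot,z(\cdot))$ uniformly on $[a,b]$, so passing to the limit under the integral gives $z(t)=\xi+\int_a^t g(s,z(s))\,ds$ for all $t\in[a,b]$.

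Finally, the integrand $s\mapsto g(s,z(s))$ is continuous, hence $z\in C^1([a,b];\R^d)$ with $z(a)=\xi$ and $z'(t)=g(t,z(t))$ on $[a,b]$, which is the assertion. An alternative route avoiding polygons is Schauder's fixed point theorem applied to the compact operator $T(u)(t)=\xi+\int_a^t g(s,u(s))\,ds$ on a suitable ball of $C([a,b];\R^d)$, the same linear growth / Gronwall a priori bound being what guarantees such a ball can be chosen invariant under $T$; I would regard settling the precise choice of invariant ball (or, in the polygon approach, the uniform passage to the limit inside the integral) as the only place requiring genuine care.
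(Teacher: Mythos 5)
Your argument is correct, but there is nothing in the paper to compare it against: Theorem~\ref{peano} is stated as a quotation from \cite{gorn_ing} and is used as a black box, with no proof given (the paper's own analytic work starts only in Lemma~\ref{odes_sol_ex1}, where uniqueness and the a priori bound are derived from (G3) and Gronwall). Your self-contained Euler-polygon/Arzel\`a--Ascoli proof is the classical one and is complete as sketched: the discrete Gronwall bound from (G2) gives a uniform bound $\rho$ on the nodes, hence uniform boundedness and a uniform Lipschitz constant $M=K(1+\rho)$ for the polygons, Arzel\`a--Ascoli extracts a uniform limit $z$, and uniform continuity of $g$ on the compact set $[a,b]\times U$ lets you pass to the limit in $l_n(t)=\xi+\int_a^t g(\eta_n(s))\,ds$, giving the integral equation and hence $z\in C^1([a,b];\R^d)$ --- which also quietly corrects the typo in the paper's statement, where the solution is said to lie in $C^1([a,b]\times\R^d;\R^d)$ rather than $C^1([a,b];\R^d)$. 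It is a nice feature of your route that the approximate solutions are exactly the polygons $l_n$ analyzed in Section~\ref{sec_euler_exact}, and that you isolate correctly why (G2) yields existence on all of $[a,b]$ rather than merely locally. One caveat on your alternative route: for the Schauder argument a plain sup-norm ball $\{\|u\|_\infty\le r\}$ is invariant under $T$ only when $K(b-a)<1$, since invariance requires $\|\xi\|+K(b-a)(1+r)\le r$; in general one must either work with a Bielecki-type weighted norm $\sup_t e^{-\lambda (t-a)}\|u(t)\|$ with $\lambda>K$, or take as the invariant convex set the functions obeying the Gronwall bound of Lemma~\ref{odes_sol_ex1}. You flagged this as the delicate point, and it is; the polygon proof you actually carried out does not suffer from it.
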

%%%%%%%%%%%%
\begin{lem}
\label{odes_sol_ex1}
Let $\xi\in\mathbb{R}^d$ and let us assume that $g$ satisfies the assumptions (G1), (G2) and (G3). Then there exists a unique solution $z=z(\xi,g)\in C^1([a,b];\mathbb{R}^d)$ of \eqref{ode_gen} and 
\begin{equation}
\label{sol_bound}
    \sup_{a \leq t \leq b} \| z(t) \| \leq K_0,
\end{equation}
where $K_0 = K_0(a,b,K,\xi) = (\|\xi\| + K(b-a)) e^{K(b-a)}\leq C_0(1+\|\xi\|)$ and $C_0=C_0(a,b,K)=\max\{1,K(b-a)\}e^{K(b-a)}$.
\end{lem}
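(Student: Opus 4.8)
The plan is to obtain existence from Peano's theorem (Theorem~\ref{peano}), uniqueness from the one-sided Lipschitz condition (G3) via a Gronwall argument, and the a priori bound \eqref{sol_bound} from the linear growth condition (G2) again via Gronwall. First I would invoke Theorem~\ref{peano}: since $g$ satisfies (G1) and (G2), the problem \eqref{ode_gen} has at least one solution $z\in C^1([a,b];\R^d)$. This disposes of existence.

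For the a priori bound, let $z$ be any solution. Writing $z(t)=\xi+\int_a^t g(s,z(s))\,ds$ and using (G2), I get
\begin{equation}
    \|z(t)\|\leq \|\xi\|+\int_a^t K(1+\|z(s)\|)\,ds = \|\xi\|+K(b-a)+K\int_a^t\|z(s)\|\,ds,
\end{equation}
so Gronwall's lemma yields $\|z(t)\|\leq (\|\xi\|+K(b-a))e^{K(t-a)}\leq (\|\xi\|+K(b-a))e^{K(b-a)}=K_0$ for all $t\in[a,b]$. The final inequality $K_0\leq C_0(1+\|\xi\|)$ is immediate from $\|\xi\|+K(b-a)\leq \max\{1,K(b-a)\}(1+\|\xi\|)$.

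For uniqueness, suppose $z_1,z_2$ are two solutions. By the bound just proved, both take values in the compact ball $U=B_0(K_0)$ (with $K_0$ depending on the common $\xi$), so (G3) applies with the constant $H_U$. Setting $\varphi(t)=\|z_1(t)-z_2(t)\|^2$, which is $C^1$, I compute $\varphi'(t)=2\langle z_1(t)-z_2(t),\,g(t,z_1(t))-g(t,z_2(t))\rangle\leq 2H_U\|z_1(t)-z_2(t)\|^2=2H_U\varphi(t)$; note this holds whether or not $H_U$ is positive. Since $\varphi(a)=0$, Gronwall's lemma (or the explicit integrating factor $e^{-2H_Ut}$) forces $\varphi\equiv 0$, hence $z_1\equiv z_2$. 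The only mild subtlety — and the closest thing to an obstacle — is that (G3) is stated for $y_1,y_2$ ranging over a fixed compact set, so one must first establish the a priori bound to pin down which compact set to use; this is why the order of the argument matters, with the growth estimate preceding the uniqueness step.
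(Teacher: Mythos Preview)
Your proof is correct and follows essentially the same route as the paper: Peano's theorem for existence, the linear growth condition (G2) with Gronwall for the a priori bound, and then (G3) on the compact ball $B_0(K_0)$ with a Gronwall argument on $\varphi(t)=\|z_1(t)-z_2(t)\|^2$ for uniqueness. The only cosmetic slip is the equals sign in your display for the a priori bound (it should be $\leq$, since $\int_a^t K\,ds=K(t-a)\leq K(b-a)$), and the paper replaces $H_U$ by $H_U^+$ before invoking Gronwall, whereas you correctly note that the sign of $H_U$ is irrelevant.
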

\begin{proof}
By (G1), (G2) and from Theorem \ref{peano} the equation \eqref{ode_gen} has at least one solution $z=z(\xi,g) \in C^1([a,b]; \R^d)$. Hence, by (G2) we get for all $t\in [a,b]$
\begin{equation}
    \|z(t)\|\leq \|\xi\|+K(b-a)+K\int\limits_a^t\|z(s)\|ds,
\end{equation}
and from the  Gronwall's lemma we obtain that
\begin{equation}
\sup_{a \leq t \leq b} \| z(t) \| \leq K_0,
\end{equation}
where $K_0 = K_0(a,b,K,\xi) = (\|\xi\| + K(b-a)) e^{K(b-a)}$. Hence, $z(t)\in B_0(K_0)$ for all $t\in [a,b]$.

Let us consider two solutions $z=z(\xi,g)$ and $\tilde z=\tilde z(\xi,g)$  of \eqref{ode_gen}. We have that $z(t),\tilde z(t)\in B_0(K_0)$ for all $t\in [a,b]$. By (G3) applied to $U=B_0(K_0)$ we have that there exists $H_U\in\mathbb{R}$ such that for all $t \in [a, b]$
\begin{eqnarray}
\frac{d}{dt} \| z(t) - \tilde z(t) \|^2 &=& \frac{d}{dt} \left\| \int_a^t (g(s,z(s)) - g(s,\tilde z(s))) ds  \right\|^2 \notag\\
&\leq& 2 \langle z(t) - \tilde z(t), g(t, z(t)) - g(t, \tilde z(t))  \rangle \notag\\
&\leq& 2 H_U \|z(t) - \tilde z(t)\|^2 \leq 2H_U^+ \|z(t) - \tilde z(t)\|^2.
\end{eqnarray}
Hence, the $C^1$-function $[a,b]\ni t\to \varphi(t) =  \| z(t)-\tilde z(t)\|^2\in [0, +\infty)$, where $\varphi(a)=0$, satisfies the following integral inequality
\begin{displaymath}
    \varphi(t) \leq 2 H_U^+ \int_a^t \varphi(s)ds.
\end{displaymath}
From the  Gronwall's lemma we therefore get that $\varphi(t) = 0$ for all $t \in [a,b]$, which in turn implies that  $z(t) = \tilde z(t)$ for all $t\in [a,b]$.
\end{proof}

\begin{rem}
An existence and a uniqueness of the solution of \eqref{ode_gen} assuming (G1)-(G3) follow also from Theorem 3.27. page 182 in \cite{ParRas2014}.
\end{rem}
%%%%%%%%%%%%%%%%%%%%%%%
\begin{fact}
\label{ex_g_g1g4}
  The function \eqref{exampl_g1} satisfies the assumptions (G1)-(G4) with $\alpha=\alpha$ and $\beta=\min\{\varrho_1,\varrho_2\}$.
\end{fact}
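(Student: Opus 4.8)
The plan is to verify conditions (G1)--(G4) separately for the scalar function $g$ of \eqref{exampl_g1} (here $d=1$), writing $g=g_1+g_2+g_3$ with $g_1(t,x)=A|t|^{\alpha}x\sin(x^2+1)$, $g_2(x)=B_1\sgn(x)|x|^{\varrho_1}$, $g_3(x)=B_2\sgn(x)|x|^{\varrho_2}$, and handling the three summands one by one. Throughout I will use the following elementary facts, valid for any exponent $p\in(0,1]$: $s\mapsto s^{p}$ is subadditive on $[0,\infty)$, so $|u^{p}-v^{p}|\le|u-v|^{p}$ for $u,v\ge0$; $s\mapsto s^{p}$ is concave, so $a^{p}+b^{p}\le2^{1-p}(a+b)^{p}$ for $a,b\ge0$, which (after checking the equal-sign and opposite-sign cases) gives the \emph{global} bound $|\sgn(x_1)|x_1|^{p}-\sgn(x_2)|x_2|^{p}|\le2^{1-p}|x_1-x_2|^{p}$ for all $x_1,x_2\in\R$; the map $x\mapsto\sgn(x)|x|^{p}$ is nondecreasing on $\R$; the map $x\mapsto x\sin(x^2+1)$ is $C^{1}$, hence Lipschitz on every compact subset of $\R$; and for $x_1,x_2$ in a compact $U$ and any $q\ge\beta$ one has $|x_1-x_2|^{q}\le(1+\diam U)^{q-\beta}|x_1-x_2|^{\beta}$, which lets one reduce any exponent $\ge\beta$ to $\beta$ on bounded sets.

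For (G1), $g$ is a finite sum of products of continuous functions, the only delicate point being continuity of $\sgn(x)|x|^{\varrho_i}$ at $x=0$, which holds since both one-sided limits vanish. For (G2), estimate $|g(t,x)|\le|A|(\max\{|a|,|b|\})^{\alpha}|x|+|B_1|\,|x|^{\varrho_1}+|B_2|\,|x|^{\varrho_2}$ and apply $|x|^{\varrho_i}\le1+|x|$ to obtain linear growth with an explicit $K=K(A,B_1,B_2,a,b)$. For (G3), fix $U\in\comp(\R)$: since $x\mapsto x\sin(x^2+1)$ has some Lipschitz constant $M_U$ on $U$, $|g_1(t,x_1)-g_1(t,x_2)|\le|A|(\max\{|a|,|b|\})^{\alpha}M_U|x_1-x_2|$, hence $(x_1-x_2)\bigl(g_1(t,x_1)-g_1(t,x_2)\bigr)\le C_U|x_1-x_2|^{2}$; and because $B_1,B_2\le0$ and $x\mapsto\sgn(x)|x|^{\varrho_i}$ is nondecreasing, the products $(x_1-x_2)\bigl(g_j(x_1)-g_j(x_2)\bigr)$ are $\le0$ for $j=2,3$ (in fact globally), which is exactly why a \emph{local} one-sided Lipschitz constant suffices. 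Summing, (G3) holds with $H_U=C_U$.

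For (G4), fix $U\in\comp(\R)$ and split $g(t_1,x_1)-g(t_2,x_2)=\bigl[g_1(t_1,x_1)-g_1(t_2,x_1)\bigr]+\bigl[g(t_2,x_1)-g(t_2,x_2)\bigr]$. The first bracket equals $A\,x_1\sin(x_1^2+1)\bigl(|t_1|^{\alpha}-|t_2|^{\alpha}\bigr)$; bounding $|x_1\sin(x_1^2+1)|$ on $U$ and using $\bigl||t_1|^{\alpha}-|t_2|^{\alpha}\bigr|\le\bigl||t_1|-|t_2|\bigr|^{\alpha}\le|t_1-t_2|^{\alpha}$ controls it by $L'_U|t_1-t_2|^{\alpha}$. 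The second bracket is a sum of three terms: the $g_1$-term is Lipschitz in $x$ on $U$ and is then downgraded to exponent $\beta$ by the comparison above; the $g_2$- and $g_3$-terms are bounded by $2^{1-\varrho_i}|B_i|\,|x_1-x_2|^{\varrho_i}$ via the global Hölder bound, and likewise downgraded to exponent $\beta=\min\{\varrho_1,\varrho_2\}$ on $U$. Collecting the estimates yields (G4) with exponents $\alpha$ and $\beta=\min\{\varrho_1,\varrho_2\}$ and an explicit $L_U$.

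The only step with genuine content is the global Hölder estimate for $x\mapsto\sgn(x)|x|^{p}$ in the opposite-sign case, which rests on the concavity inequality $a^{p}+b^{p}\le2^{1-p}(a+b)^{p}$; everything else is routine bookkeeping. I also note that the shape of the compact set $U$ is never used, so the verification of (G3) and (G4) applies verbatim to an arbitrary $U\in\comp(\R)$, and that the monotone contributions $g_2,g_3$ enter (G3) with a sign but (G4) with genuine Hölder (not Lipschitz) moduli, which is the source of the exponent $\beta=\min\{\varrho_1,\varrho_2\}$ in the conclusion.
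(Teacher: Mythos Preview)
Your proof is correct and follows essentially the same route as the paper's: decompose $g$ into the smooth term $g_1$ and the two monotone terms $g_2,g_3$, verify (G1)--(G2) directly, obtain (G3) from local Lipschitzness of $g_1$ together with the nonpositive one-sided contribution of $g_2,g_3$ (since $B_i\le0$ and $x\mapsto\sgn(x)|x|^{\varrho_i}$ is nondecreasing), and obtain (G4) by combining the H\"older-in-$t$ estimate for $|t|^{\alpha}$ with the local Lipschitz/H\"older-in-$x$ estimates and then downgrading all space exponents to $\beta=\min\{\varrho_1,\varrho_2\}$ on compacts. The only cosmetic differences are that the paper cites external lemmas for the H\"older bound on $\sgn(x)|x|^{p}$ (with constant $2$ rather than your sharper $2^{1-p}$) and gives an explicit formula for the Lipschitz constant of $x\sin(x^2+1)$ instead of invoking $C^1$-regularity.
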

\begin{proof}
We have that $g(t,x)=\sum\limits_{i=1}^3 h_i(t,x)$ where  $h_1(t,x)=A|t|^{\alpha}x\sin(x^2+1)$ and $h_i(t,x)=B_i\sgn(x) |x|^{\varrho_i}$ for $i=2,3$, $(t,x)\in [a,b]\times\mathbb{R}$. Of course $h_i\in C([a,b]\times\mathbb{R};\mathbb{R})$ for $i=1,2,3$ and therefore $g$ satisfies (G1). Moreover, for all $(t,x)\in [a,b]\times\mathbb{R}$
\begin{equation}
    |g(t,x)|\leq |A| D_1 |x|+(|B_1|+|B_2|)(1+|x|)\leq \Bigl(|A| D_1 +|B_1|+|B_2|\Bigr)(1+|x|),
\end{equation}
where $D_1=\max\limits_{a\leq t\leq b}|t|^{\alpha}$. Therefore, the function $g$ satisfies (G2). Next, for all $t_1,t_2\in [a,b]$, $x_1,x_2\in\mathbb{R}$ we have that
\begin{equation}
    |h_1(t_1,x_1)-h_1(t_2,x_2)|\leq |h_1(t_1,x_1)-h_1(t_1,x_2)|+|h_1(t_1,x_2)-h_1(t_2,x_2)|,
\end{equation}
where
\begin{equation}
\label{h_1_loc_lip_1}
    |h_1(t_1,x_1)-h_1(t_1,x_2)|\leq \frac{3}{2}|A|\cdot D_1\cdot (1+x_1^2+x_2^2)\cdot |x_1-x_2|,
\end{equation}
and, by Lemma A1.3 in  \cite{CZPMPP2022},
\begin{equation}
\label{h_1_loc_lip_2}
    |h_1(t_1,x_2)-h_1(t_2,x_2)|\leq |A|\cdot |x_2|\cdot |t_1-t_2|^{\alpha}.
\end{equation}
This implies, in particular, that for all $t\in [a,b]$, $x_1,x_2\in \mathbb{R}$
\begin{eqnarray}
\label{OSL_h_1}
    &&(x_1-x_2)(h_1(t,x_1)-h_1(t,x_2))\leq |(x_1-x_2)(h_1(t,x_1)-h_1(t,x_2))|\notag\\
    &&= |x_1-x_2|\cdot |h_1(t,x_1)-h_1(t,x_2)|\leq \frac{3}{2}|A|\cdot D_1\cdot (1+x_1^2+x_2^2)\cdot |x_1-x_2|^2.
\end{eqnarray}
Since $B_1,B_2\leq 0$, from the proof of Fact A.4 in \cite{CZPMPP2022} we have that for all $t\in [a,b]$, $x_1,x_2\in\mathbb{R}$, $i=2,3$
\begin{equation}
\label{OSL_h_i}
    (x_1-x_2)(h_i(t,x_1)-h_i(t,x_2))\leq 0,
\end{equation}
and for all $t_1,t_2\in [a,b]$, $x_1,x_2\in\mathbb{R}$, $i=2,3$
\begin{equation}
\label{h_i_holder}
    |h_i(t_1,x_1)-h_i(t_2,x_2)|\leq 2|B_i|\cdot |x_1-x_2|^{\varrho_i}.
\end{equation}
Hence, by \eqref{OSL_h_1}, \eqref{OSL_h_i} we get for all $t\in [a,b]$, $x_1,x_2\in\mathbb{R}$
\begin{eqnarray}
    && (x_1-x_2)(g(t,x_1)-g(t,x_2))=(x_1-x_2)\cdot\sum\limits_{i=1}^3 (h_i(t,x_1)-h_i(t,x_2))\notag\\
    &&\leq (x_1-x_2)(h_1(t,x_1)-h_1(t,x_2))\leq  \frac{3}{2}|A|\cdot D_1\cdot (1+x_1^2+x_2^2)\cdot |x_1-x_2|^2.
\end{eqnarray}
Since for every $U\in\comp(\mathbb{R})$ there exists a ball $B_0(R)$, with the radius $R\in (0,+\infty)$, such that $U\subset B_0(R)$, for all $t\in [a,b]$, $x_1,x_2\in U$ we have
\begin{equation}
    (x_1-x_2)(g(t,x_1)-g(t,x_2))\leq  \frac{3}{2}|A|\cdot D_1\cdot (1+2R^2)\cdot |x_1-x_2|^2,
\end{equation}
and, by \eqref{h_1_loc_lip_1},\eqref{h_1_loc_lip_2}, \eqref{h_i_holder}, for all $t_1,t_2\in [a,b]$, $x_1,x_2\in\mathbb{R}$
\begin{eqnarray*}
    &&|g(t_1,x_1)-g(t_2,x_2)|\leq\sum\limits_{i=1}^3 |h_i(t_1,x_1)-h_i(t_2,x_2)|\notag\\
    &&\leq |A|\cdot R\cdot |t_1-t_2|^{\alpha}+\max\Bigl\{\frac{3}{2}|A|D_1(1+2R^2),2|B_1|,2|B_2|\Bigr\}\cdot 3 (1+2R)\cdot |x_1-x_2|^{\min\{\varrho_1,\varrho_2\}}.
\end{eqnarray*}
Hence, $g$ satisfies also (G3), (G4), and this ends the proof.
\end{proof}
%%%%%%%%%%%%%%%%%
\begin{fact}
\label{ex_multipl_g1g4}
  The function \eqref{exampl_g2} satisfies the assumptions (G1)-(G4) with $\alpha=\alpha$ and $\beta=\varrho$.
\end{fact}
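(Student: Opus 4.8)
The plan is to decompose $g(t,x)=\gamma(t)h(x)f(x)$ into factors whose individual regularity properties are easy to control, and then to assemble the four conditions (G1)--(G4) by elementary products and sums. First I would record the basic bounds on the three factors: from \eqref{example_g2_gamma1} and \eqref{example_g2_gamma2}, $\gamma$ is nonnegative, $\alpha$-H\"older on $[a,b]$, hence continuous and bounded, say $0\le\gamma(t)\le M_\gamma$; from \eqref{example_g2_f1}, $0\le f(x)\le D$; and $h(x)=-\sgn(x)|x|^{\varrho}$ is continuous with $|h(x)|\le|x|$ when $|x|\ge1$, and in general $|h(x)|\le 1+|x|$. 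Continuity of each factor gives (G1) immediately, and for (G2) one writes $|g(t,x)|=\gamma(t)f(x)|h(x)|\le M_\gamma D(1+|x|)$, so $K=M_\gamma D$ works.

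For (G4) the strategy is a standard telescoping over the three factors on a fixed ball $U\subseteq B_0(R)$. I would write
\begin{align*}
|g(t_1,x_1)-g(t_2,x_2)|
&\le |\gamma(t_1)-\gamma(t_2)|\,|h(x_1)|\,|f(x_1)|\\
&\quad+\gamma(t_2)\,|h(x_1)-h(x_2)|\,|f(x_1)|
+\gamma(t_2)\,|h(x_2)|\,|f(x_1)-f(x_2)|,
\end{align*}
then bound $|h(x_1)|,|h(x_2)|\le(1+R)$, $|f(x_1)|\le D$, use \eqref{example_g2_gamma1} for the first term, \eqref{example_g2_f2} for the third, and for the middle term the elementary H\"older estimate $|{-\sgn(x_1)|x_1|^{\varrho}}+\sgn(x_2)|x_2|^{\varrho}|\le 2|x_1-x_2|^{\varrho}$ (this is the scalar H\"older bound for $x\mapsto\sgn(x)|x|^{\varrho}$, which also appears as \eqref{h_i_holder} in Fact~\ref{ex_g_g1g4}). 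Since on a bounded set $|x_1-x_2|\le|x_1-x_2|^{\varrho}(2R)^{1-\varrho}$, the first and third terms can be absorbed into an $\|x_1-x_2\|^{\varrho}$ term, giving (G4) with $\beta=\varrho$ and the stated $\alpha$, with $L_U$ depending on $R$, $M_\gamma$, $D$, $L_\gamma$, $L_f$, $\varrho$.

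The main obstacle, and the only place where the product structure really matters, is (G3): one must show the one-sided Lipschitz inequality $(x_1-x_2)(g(t,x_1)-g(t,x_2))\le H_U|x_1-x_2|^2$ on $U$. The key sign fact is that $h(x)=-\sgn(x)|x|^{\varrho}$ is nonincreasing, so $(x_1-x_2)(h(x_1)-h(x_2))\le 0$; this is exactly the dissipativity that makes $\gamma(t)\ge0$ essential. I would split $g(t,x_1)-g(t,x_2)=\gamma(t)f(x_1)\bigl(h(x_1)-h(x_2)\bigr)+\gamma(t)h(x_2)\bigl(f(x_1)-f(x_2)\bigr)$. Multiplying by $(x_1-x_2)$: the first piece is $\gamma(t)f(x_1)(x_1-x_2)(h(x_1)-h(x_2))\le 0$ since $\gamma(t)\ge0$, $f(x_1)\ge0$, and $h$ is nonincreasing; the second piece is bounded by $\gamma(t)|h(x_2)|\,|f(x_1)-f(x_2)|\,|x_1-x_2|\le M_\gamma(1+R)L_f|x_1-x_2|^2$ using \eqref{example_g2_f2}. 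Hence (G3) holds on $B_0(R)\supseteq U$ with $H_U=M_\gamma(1+R)L_f$. Assembling (G1)--(G4) completes the proof; I would note in passing that the decomposition and scalar estimates parallel those in the proof of Fact~\ref{ex_g_g1g4}.
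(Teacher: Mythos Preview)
Your argument is correct and follows essentially the same route as the paper: the same product decomposition for (G3), using $\gamma\ge 0$, $f\ge 0$ and the monotonicity of $h$ to drop the $h$-difference term, and the same telescoping for (G4) with the $\varrho$-H\"older bound on $h$ and the conversion $|x_1-x_2|\le (2R)^{1-\varrho}|x_1-x_2|^{\varrho}$ on a ball. The only slip is expository: in your (G4) paragraph it is the \emph{third} term (not the first) that needs to be absorbed into the $|x_1-x_2|^{\varrho}$ part, since the first term already furnishes the $|t_1-t_2|^{\alpha}$ contribution.
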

\begin{proof}
Of course, $g \in C([a,b]\times\R; \R)$ and therefore $g$ satisfies (G1).
Furthermore, for all $(t,x) \in [a,b]\times\R$
\begin{align*}
    |g(t,x)| &  \leq D\sup_{t \in [a,b]} |\gamma(t)| (1 + |x|),
\end{align*}
hence $g$ satisfies (G2) with $K=D \sup\limits_{t \in [a,b]} |\gamma(t)|$. Moreover, $h$ is monotonically decreasing, thus for all $t\in [a,b]$ and $x,y \in \R$
\begin{align*}
    (x-y)\Bigl(g(t, x) - g(t, y)\Bigr) &= \gamma(t)(x-y)\Bigl(h(x)f(x) - h(y)f(y)\Bigr) \\
   % & = \gamma(t)(x-y) \Bigl(h(x)f(x) - h(y)f(x) + h(y)f(x) - h(y)f(y)\Bigr) \\
    & = \gamma(t)f(x)(x-y)(h(x) - h(y)) + \gamma(t)(x-y)h(y)(f(x) - f(y)) \\
    & \leq \gamma(t)(x-y)h(y)(f(x) - f(y)) 
     \leq |\gamma(t)(x-y)h(y)(f(x) - f(y))| \\
   % & \leq \sup_{t \in [a,b]}|\gamma(t)|\cdot |x-y|\cdot |x|^{\frac{2}{3}}\cdot|f(x) - f(y)| \\
   % & \leq \sup_{t \in [a,b]}|\gamma(t)|\cdot |x-y|\cdot (1+|x|)\cdot|f(x) - f(y)| \\
    & \leq L_f\sup_{t \in [a,b]}|\gamma(t)|\cdot (1+|y|) \cdot |x-y|^2 .
\end{align*}
For all $U \in \comp(\R)$ there exists a ball $B_0(R)$, $R>0$, such that $U \subset B_0(R)$ and for all $t \in [a,b]$, $x, y \in U$ we have
\begin{displaymath}
    (x-y)\Bigl(g(t, x) - g(t, y)\Bigr) \leq L_f\sup_{t \in [a,b]}|\gamma(t)|\cdot (1+R) \cdot |x-y|^2.
\end{displaymath}
Hence, $g$ satisfies the assumption (G3). Furthermore, for all $t_1, t_2 \in [a,b]$ and $y_1, y_2 \in \R$ we have
\begin{displaymath}
    |g(t_1, y_1) - g(t_2, y_2)| \leq |g(t_1, y_1) - g(t_2, y_1)| + |g(t_2, y_1) - g(t_2, y_2)|,
\end{displaymath}
where
\begin{equation}
    |g(t_1, y_1) - g(t_2, y_1)|  \leq L_{\gamma} D \cdot (1+|y_1|)\cdot  |t_1 - t_2|^\alpha,
\end{equation}
and
\begin{align*}
    |g(t_2, y_1)  - g(t_2, y_2)| 
    %= |\gamma(t_2)h(y_1)f(y_1) - \gamma(t_2)h(y_2)f(y_2)| \\
    %& = |\gamma(y_2)|\cdot |h(y_1)f(y_1) - h(y_1)f(y_2) + h(y_1)f(y_2) - h(y_2)f(y_2)| \\
    %& \leq \sup_{t \in [a,b]} |\gamma(t)| \cdot |h(y_1)|\cdot |f(y_1) - f(y_2)| + \sup_{t \in [a,b]} |\gamma(t)| \cdot |f(y_2)| \cdot |h(y_1) - h(y_2)| \\
    & \leq L_f\sup_{t \in [a,b]} |\gamma(t)| \cdot  (1+|y_1|)\cdot |y_1 - y_2| + 2D\sup_{t \in [a,b]} |\gamma(t)| \cdot   |y_1 - y_2|^\varrho \\
    %& \leq \sup_{t \in [a,b]} |\gamma(t)| \cdot \max\{L_f, D\}\cdot\Bigl[ (1+|y_1|)\cdot |y_1 - y_2|^{1 - \varrho} +2 \Bigr]\cdot |y_1 - y_2|^\varrho \\
    &\leq  \sup_{t \in [a,b]} |\gamma(t)| \cdot \max\{L_f, D\} \cdot\Bigl[(1+|y_1|)\cdot (1+|y_1|+|y_2|)+2\Bigr]\cdot |y_1-y_2|^{\varrho}.
\end{align*}
Thus, since for all $U \in \comp(\R)$ there exists a ball $B_0(R)$, $R>0$, such that $U \subset B_0(R)$, for all  $t_1, t_2 \in [a,b]$, $y_1, y_2 \in U$ we have
\begin{eqnarray*}
    |g(t_1, y_1) &-& g(t_2, y_2)| \leq L_\gamma D (1+R)|t_1 - t_2|^\alpha\\ 
    && +\sup_{t \in [a,b]} |\gamma(t)| \cdot \max\{L_f, D\}\Bigl[ (1+R)(1+2R) + 2 \Bigr]|y_1 - y_2|^\varrho. 
\end{eqnarray*}
This implies that $g$ satisfies (G4) with $\alpha = \alpha$ and $\beta = \varrho$, and the proof is completed.
\end{proof}
%%%%%%%%%%%%%%%%%%%%%%

%%%%%%%%%%%%%%%%%%%%%%

\end{document}